\newtheorem{teo}{Theorem}[section]
\newtheorem{pr}[teo]{Proposition}
\newtheorem{lem}[teo]{Lemma}
\newtheorem{cnj}[teo]{Conjecture}
\newtheorem{ob}[teo]{Remark}
\newtheorem{al}[teo]{Algorithm}
\title{On a parameterization of $(1,1)$-knots}
\author{Jos\'e Fr\'ias }
\begin{document}
\maketitle


\begin{abstract}
A $(1,1)$-knot in the 3-sphere is a knot that admits a 1-bridge presentation with respect to a Heegaard torus in $\mathbb{S}^{3}$. A new parameterization of $(1,1)$-knots distinct from the classical ones is introduced. This parameterization is obtained from minimal-length representatives of homotopy classes of arcs in the mutipunctured plane. In the particular case of satellite $(1,1)$-knots, it is proven that the introduced parameterization is essentially unique. A generalization of this parameterization to the family of $(g,1)$-knots for any $g\geq 1$ is proposed.      
\end{abstract}

\section{Introduction}\label{sec:1}

A knot $K\subset\mathbb{S}^{3}$ is called a \emph{$(g,b)$-knot} if there exists a genus-$g$ Heegaard splitting of the 3-sphere, $\mathbb{S}^{3}=H_{1}\cup H_{2}$, such that $K\cap H_{i}$ is the union of $b$ mutually disjoint properly embedded trivial arcs, $i=1,2$. In the present work, we are interested in the family of $(1,1)$-knots. This family of knots contains the very well known subfamilies of torus knots and rational knots, and it is contained in the family of knots with tunnel number 1.     

Representations of knots in bridge positions with respect to Heegaard surfaces may be helpful in the study of particular surfaces concerning the knots  (see \cite{CMS} or \cite{EM3}). There are previously known parameterizations of $(1,1)$-knots, such as Schubert and Conway normal forms (see \cite{CK}). The Schubert normal form requires a $4$-tuple of integers to parameterize a given $(1,1)$-knot. On the contrary, the parameterization  proposed in this work has an unbounded number of parameters. In \cite{CM}, the authors studied an algebraic representation of $(1,1)$-knots via the mapping class group of the twice punctured torus $MCG_{2}(T)$.

The parameterization of $(1,1)$-knots that we propose has a geometric motivation. We establish a relation between a $(1,1)$-knot $K$ in a specific position and an arc $\beta$ in the $\epsilon$-multipunctured plane $\mathcal{B}_{\epsilon}$, such that $\partial \beta \subset \partial \mathcal{B}_{\epsilon} $. There is a unique minimal-length representative $\beta_{0}$ in the homotopy class of $\beta$ in $\mathcal{B}_{\epsilon}$. A parameterization of the arc $\beta_{0}$ induces the parameterization of $K$ as shown in Theorem \ref{teo:31}, we name it a tight parameterization of $K$. It would be an interesting topic the study of the relation between this representation of $(1,1)$-knots and those mentioned in the previous paragraph. 

In Section \ref{sec:2}, we analyze minimal-length arcs in the multipunctured plane $\mathcal{B}_{\epsilon}$ with one of its endpoints in a fix component of $\partial \mathcal{B}_{\epsilon}$. We define two simplifications of the curve $\beta_{0}$ obtained by decreasing the value of $\epsilon$. The connection between a $(1,1)$-knot and a minimal-length curve in $\mathcal{B}_{\epsilon}$ that induces the parameterization of the knot is established in Section \ref{sec:3}. It is proven in Section \ref{sec:4} that in the family of satellite $(1,1)$-knots, the tight parameterization of a knot is essentially unique (Theorem \ref{teo:up}). An algorithm to find  tight parameterizations for satellite $(1,1)$-knots based on the description of these knots by Morimoto and Sakuma  is presented (Algorithm \ref{al:1}). Finally, we propose in Section \ref{sec:5} a generalization of Theorem \ref{teo:31}  to the general case of $(g,1)$-knots for any $g\geq 1$. To this end, we consider the model of the hyperbolic geoboard (arcs embedded in a hyperbolic multipunctured disk), and suggest how the results obtained in the case $g=1$ could be extended.


\section{The multipunctured plane}\label{sec:2}

In this section, we introduce a model that will be useful to establish the proposed parameterization of $(1,1)$-knots. Consider the plane $\mathbb{R}^{2}$ equipped with the flat metric and the standard unitary square tiling $\mathcal{T}$ with vertices at the points in the plane with integer coordinates. Let $W$ be the set of points in the plane with coordinates $(l/2,m/2)$, where $l$ and $m$ are odd integers. For a sufficiently small real number $1/2>\epsilon > 0$, consider the set $\mathfrak{B}_{\epsilon}=\mathbb{R}^{2}\setminus \bigcup D_{\epsilon}(w)$, where $D_{\epsilon}(w)$ is an $\epsilon$-radius open disk centered at $w$ for every $w\in W$. We will call the set $\mathfrak{B}_{\epsilon}$ the \emph{$\epsilon$-multipunctured plane} and it is the plane with small disks centered at the midpoints of the tiles in $\mathcal{T}$ removed.  \par
Let $\beta$ be a smooth curve in $\mathfrak{B}_{\epsilon}$ with endpoints $z_{0}$ and $z_{1}$ in $\partial D_{\epsilon}(w_{0})$ and $\partial D_{\epsilon}(w_{0}')$, respectively, for some points $w_{0}, w_{0}'\in W$ (it could be $w_{0}=w_{0}'$). Suppose $\beta$ is oriented from $z_{0}$ to $z_{1}$. To establish a framework, we can stick the arc $\beta$ by taking $w_{0}$ to be a fixed point in $W$, say $w_{0}=(1/2,1/2)$. We are interested  in the homotopy class of $\beta$ in $\mathfrak{B}_{\epsilon}$ of arcs with endpoints in $\partial D_{\epsilon}(w_{0})$ and $\partial D_{\epsilon}(w_{0}')$.  \par

The problem of finding shortest  homotopic paths in a metric space under topological constraints
is one of the classical problems in geometric optimization. In the proof of Lemma 1 from \cite{ABM}, the authors show  that there exists a unique free loop of shortest length in any homotopy class of closed curves in a multipunctured plane. In our particular case,  this implies that there is a unique minimal length curve $\beta_{0}$ within the homotopy class of $\beta$ in  $\mathfrak{B}_{\epsilon}$ as a curve with endpoints in $\partial D_{\epsilon}(w_{0})$ and $\partial D_{\epsilon}(w_{0}')$.  Intuitively, imagine $\beta$ is represented by a thin physical string on the \emph{geoboard} (physical board with  nails pinned to the vertices of a square tiling), such that the endpoints of the string are tied to two nails. Once the string is completely tightened on the geoboard, we get a representation of the minimal-length curve $\beta_{0}$ in the homotopy class of $\beta$ in $\mathfrak{B}_{\epsilon}$ (see Figure \ref{fig:b2}). Moreover, the curve  $\beta_{0}$ decomposes as $\beta_{0}=\gamma_{1}\cup\delta_{1}\cup\gamma_{2}\cup\dots\cup\delta_{n}\cup\gamma_{n+1}$, where $\delta_{i}$ is  a point in $\partial D_{\epsilon}(w_{i})$ or a monotonous curve contained in $\partial D_{\epsilon}(w_{i})$ for some $w_{i}\in W$ (there is a smooth parameterization of the curve whose derivative never vanishes), while $\gamma_{j}\subset \mathfrak{B}_{\epsilon}$ is a straight line segment with interior disjoint from $\partial \mathfrak{B}_{\epsilon}$, sharing  endpoints with $\delta_{j-1}$ and $\delta_{j}$, and touching $\partial D_{\epsilon}(w_{i-1})$ and $\partial D_{\epsilon}(w_{i})$ in a tangent direction (except for the start point of $\gamma_{1}$ and the end point of $\gamma_{n+1}$), for every $i$ and $j$. Note that these minimal-length curves are related to the classical problem of the Dubin paths, which are commonly used in the fields of robotics and control theory (see \cite{BCL} or \cite{DL}).  \par
\begin{figure}
  \centering
    \includegraphics{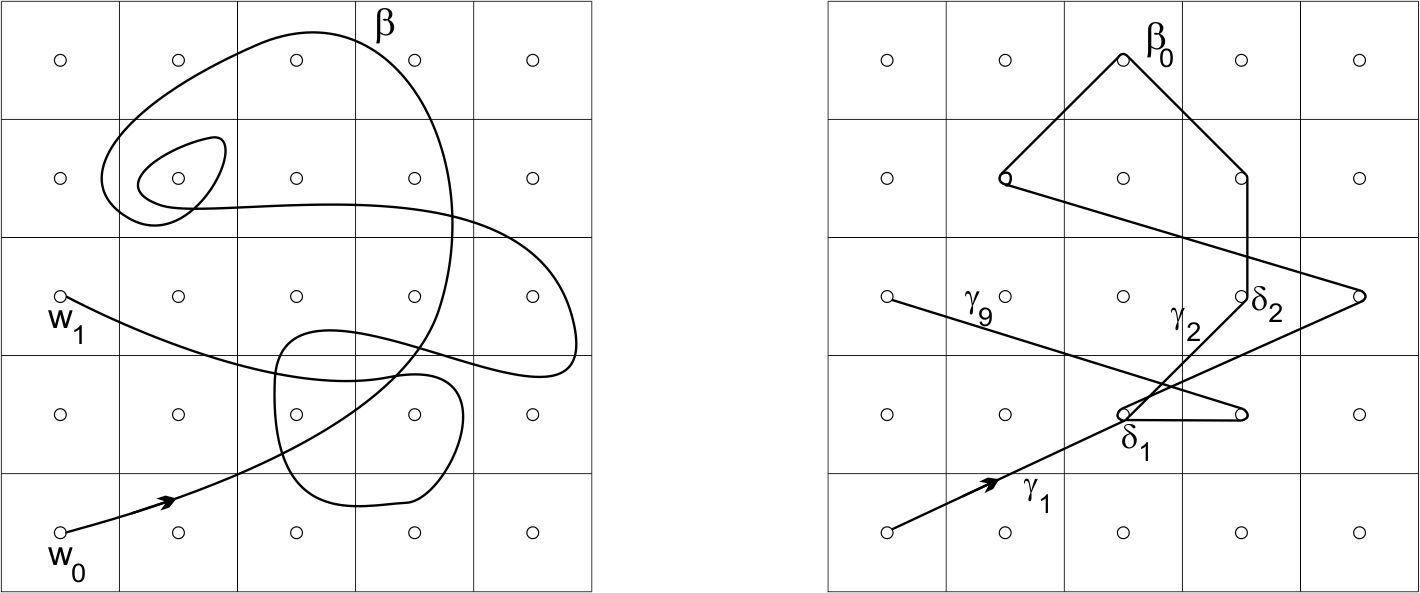}
  \caption{Homotopy between $\beta$ and its minimal-length homotopic curve }
  \label{fig:b2}
\end{figure}

We aim to parameterize all the homotopy classes of smooth curves in $\mathfrak{B}_{\epsilon}$ starting at $\partial D_{\epsilon}(w_{0})$ and  ending at any component of $\partial\mathfrak{B}_{\epsilon}$. Let $\beta\subset \mathfrak{B}_{\epsilon}$  be a smooth  curve as in the previous paragraph and let $\beta_{0}$ be its minimal-length homotopic curve. We describe how to simplify the curve $\beta_{0}$ to a canonical representative in the homotopy class of $\beta$ by decreasing the magnitude of $\epsilon$ and, consequently, extending the space $\mathfrak{B_{\epsilon}}$ (to be more precise, we extend the curves as we extend the space).  

Suppose that $\delta_{i-1}\cup \gamma_{i}\cup \delta_{i}\cup\gamma_{i+1}\cup \delta_{i+1}$ is a subcurve of $\beta_{0}$ as previously described, where the curves $\delta_{i-1}$ and $\delta_{i+1}$ are winding around the distinct points $w_{i-1}, w_{i+1}\in W$ in opposite directions (one counterclockwise and the other clockwise), while the arc $\delta_{i}$ covers an angle smaller than $\pi$ around $w_{i}$ in any direction. Let  $\lambda$ be the straight line segment in the plane  connecting and oriented from the point $w_{i-1}$ to $w_{i+1}$. Suppose that the point $w_{i}$ is on the same side of the arcs  $\lambda$ and $\gamma_{i}\cup \delta_{i}\cup\gamma_{i+1}$ as we move in the direction of their orientations, as shown in the left-hand picture of Figure \ref{fig:b3}. It follows from an elementary geometric argument that there exists a positive number $\epsilon'<\epsilon$ such that if we consider the $\epsilon'$-punctured plane  $\mathfrak{B}_{\epsilon'}$, then the subcurve $\delta_{i-1}\cup \gamma_{i}\cup \delta_{i}\cup\gamma_{i+1}\cup \delta_{i+1}$ of  $\beta_{0}$ gets simplified to a subcurve $\delta_{i-1}'\cup \gamma_{i}'\cup \delta_{i}'$ in $\beta_{0}'$, the  minimal-length representative in the homotopy class of the extension of $\beta$ to $\mathfrak{B}_{\epsilon'}$. In the right-hand picture in Figure \ref{fig:b3} we exemplify how this simplification looks like and we shall call it an \emph{arc reduction} of $\beta_{0}$. \par
\begin{figure}
  \centering
    \includegraphics{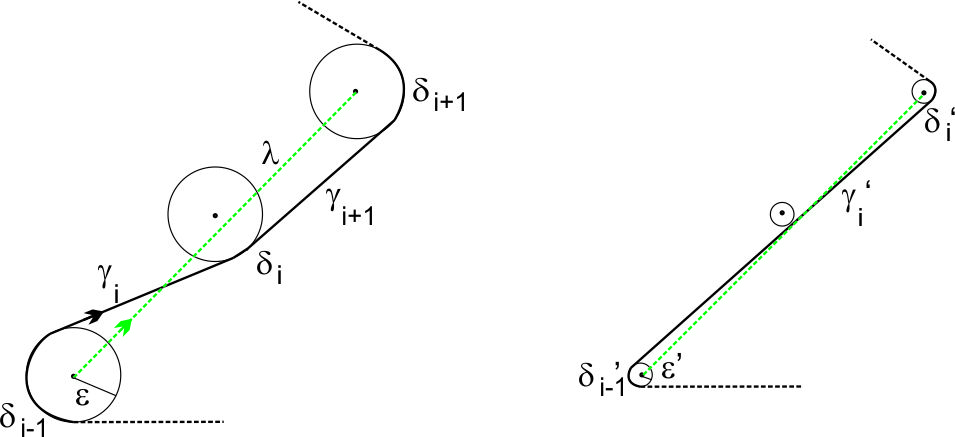}
  \caption{Arc reduction of the curve $\beta_{0}$}
  \label{fig:b3}
\end{figure} 

Now we continue with another technical simplification related to that of the previous paragraph. Let $\epsilon$, $\mathfrak{B}_{\epsilon}$ and $\beta$ be as before and let $\beta_{0}$ be the shortest curve in the homotopy class of $\beta$ in $\mathfrak{B}_{\epsilon}$ such that $\beta_{0}$ does not admit an arc reduction. Let $\delta_{i-1}\cup\gamma_{i}\cup \delta_{i}\cup \gamma_{i+1}\cup\delta_{i+1}$ be a subcurve of $\beta_{0}$ such that $\delta_{i}$ covers an angle of $r$ radians around $w_{i}\in W$, and the curves $\delta_{i-1}$ and $\delta_{i+1}$ wind around the points $w_{i-1}, w_{i+1}\in W$, respectively. Let $\lambda_{i}$ and $\lambda_{i+1}$ be the straight line segments connecting $w_{i-1}$ with $w_{i}$ and  $w_{i}$ with $w_{i+1}$ (see Figure \ref{fig:b4}). By taking a value $\epsilon'<\epsilon$ to define the space $\mathfrak{B}_{\epsilon'}$, the segments $\gamma_{i}'$ and $\gamma_{i+1}'$, corresponding to $\gamma_{i}$ and $\gamma_{i+1}$ in the shortest path in the homotopy class of $\beta$ in $\mathfrak{B}_{\epsilon'}$, approaches to $\lambda_{i}$ and $\lambda_{i+1}$, respectively. Consequently, the angle $r'$ covered by the corresponding arc $\delta_{i}'$ may decrease (this angle remains the same if the curves $\delta_{i-1}$, $\delta_{i}$ and $\delta_{i+1}$ turn in the same direction and gets reduced in any other case). In the limit, we have an angle $r_{0}$ which is delimited by the points of tangency of parallel lines to $\lambda_{i}$ and $\lambda_{i+1}$ on $\partial D_{\epsilon}(w_{i})$ as shown in Figure \ref{fig:b4}. Suppose that the angle $r_{0}$ satisfies  $(m-1)\pi<|r_{0}|\leq m\pi$, for some integer $m\geq 0$, then there exists $\epsilon'\leq\epsilon$ such that the angle $r'$ covered by the corresponding  curve $\delta_{i}'$ on $\partial D_{\epsilon'}(w_{i})$   satisfies $|r_{0}|\leq |r'| \leq m\pi$. If this last condition is satisfied, we shall say that the curve $\beta_{0}'$ is \emph{stabilized} at $\delta_{i}'$.\par

\begin{figure}
  \centering
    \includegraphics{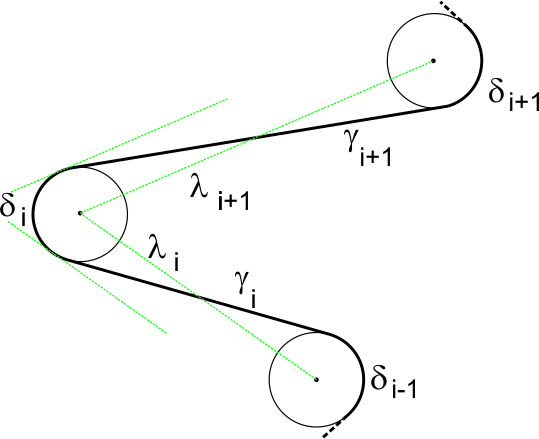}
  \caption{Stabilization at $\delta_{i}$}
  \label{fig:b4}
\end{figure} 

If $\epsilon>0$ is chosen such that in $\mathfrak{B}_{\epsilon}$ the curve $\beta_{0}=\gamma_{1}\cup\delta_{1}\cup\gamma_{2}\cup\dots\cup\delta_{n}\cup\gamma_{n+1}$, which  is the minimal-length representative in the homotopy class of $\beta$,  does not admit any arc reduction and it is stabilized at $\delta_{i}$, $i=1,\dots , n$, we say that $\beta_{0}$ is \emph{simplified}.\par  

\begin{pr}\label{teo:p1} Let $\beta\subset \mathfrak{B}_\epsilon$ an oriented smooth curve starting at $\partial D_{\epsilon}(w_{0})$ and ending at any component of $\partial \mathfrak{B}_{\epsilon}$. The homotopy class of $\beta$ is parameterized by a finite sequence of integers $(p_{1}, q_{1}, m_{1}, p_{2}, q_{2},\ldots,m_{n},p_{n+1},q_{n+1})$, for some $n\geq 0$, where $p_{i}$ or $q_{i}$ can be zero but not at the same time. 

\end{pr}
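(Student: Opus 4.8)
The plan is to read off the parameterizing sequence directly from the \emph{simplified} minimal-length representative of the homotopy class, and then to argue that this assignment is both well defined and complete.

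First I would fix a canonical representative. By the uniqueness result quoted from \cite{ABM}, the homotopy class of $\beta$ contains a unique minimal-length curve, and by performing finitely many arc reductions and stabilizations (shrinking $\epsilon$ as needed) I pass to its simplified form, which I continue to denote $\beta_{0}$. Each arc reduction strictly lowers the number of winding arcs, and once no reduction applies, stabilization only adjusts angles, so the procedure terminates; since $\beta_{0}$ has finite length and every piece is either a straight tangent segment or a monotone arc on a puncture circle, the decomposition $\beta_{0}=\gamma_{1}\cup\delta_{1}\cup\gamma_{2}\cup\cdots\cup\delta_{n}\cup\gamma_{n+1}$ has finitely many terms. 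The integer $n\geq 0$ is the number of intermediate punctures around which $\beta_{0}$ winds.

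Next I would attach the integers. Each arc $\delta_{i}$ winds monotonically around a puncture $w_{i}\in W$; because $\beta_{0}$ is stabilized at $\delta_{i}$, the covered angle lies in a half-open interval $((m-1)\pi,\,m\pi]$ determined by the tangency directions of $\gamma_{i}$ and $\gamma_{i+1}$, which fixes the nonnegative integer $|m_{i}|=m$, and I let the sign of $m_{i}$ record whether the winding is clockwise or counterclockwise. Each segment $\gamma_{i}$ joins a tangency point on the circle about $w_{i-1}$ to one on the circle about $w_{i}$, where $w_{0}=(1/2,1/2)$ is the fixed base point and $w_{n+1}=w_{0}'$ is the terminal puncture. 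Since every element of $W$ has the form $(l/2,m/2)$ with $l,m$ odd, the displacement $w_{i}-w_{i-1}$ lies in $\mathbb{Z}^{2}$; I set $(p_{i},q_{i}):=w_{i}-w_{i-1}$. In a simplified curve consecutive punctures are distinct, for otherwise the two arcs and the segment between them could be absorbed into a single arc by a reduction; hence $(p_{i},q_{i})\neq(0,0)$, which is exactly the stated constraint that $p_{i}$ and $q_{i}$ cannot both vanish.

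It remains to show that the sequence $(p_{1},q_{1},m_{1},\ldots,m_{n},p_{n+1},q_{n+1})$ is a complete invariant. That it depends only on the homotopy class follows from the uniqueness of $\beta_{0}$ together with the fact that arc reductions and stabilizations preserve the class; I would verify that, once the curve is simplified, these moves no longer alter the recorded data. For the converse I would reconstruct a representative from a given admissible sequence: the partial sums $w_{0},\, w_{0}+(p_{1},q_{1}),\,\ldots$ recover the ordered list of punctures, and the integers $m_{i}$ prescribe the local winding at each, so concatenating the corresponding tangent segments and monotone arcs produces a curve in the class, unique up to homotopy. I expect the main obstacle to be precisely this completeness step: one must confirm that the simplification procedure really terminates in a canonical normal form and that no homotopy information is lost when the geometric arc $\delta_{i}$ is compressed to the single integer $m_{i}$ — that is, that the winding angle read against the stabilization intervals, together with the lattice directions $(p_{i},q_{i})$, determines the homotopy type of $\beta_{0}$ in $\mathfrak{B}_{\epsilon}$.
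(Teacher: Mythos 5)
Your proof is correct and follows essentially the same route as the paper's: both pass to the unique simplified minimal-length representative $\beta_{0}$ and read off the winding integers $m_{i}$ of the arcs $\delta_{i}$ and the slope pairs $(p_{i},q_{i})$ of the segments $\gamma_{i}$, then invoke reconstructibility of $\beta_{0}$ from this data. The only cosmetic difference is that you define $(p_{i},q_{i})$ as the lattice displacement $w_{i}-w_{i-1}$ whereas the paper defines it as signed intersection numbers of $\gamma_{i}$ with the vertical and horizontal lines of $\mathcal{T}$; since $\epsilon<1/2$, the two definitions agree.
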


\begin{proof}

This parameterization follows from taking a sufficiently small value $\epsilon>0$ such that the shortest curve $\beta_{0}$ in the homotopy class of $\beta$ in $\mathfrak{B}_{\epsilon}$ is simplified. Suppose that $\beta_{0}$ is simplified and it decomposes as $\beta_{0}=\gamma_{1}\cup\delta_{1}\cup\gamma_{2}\cup\dots\cup\delta_{n}\cup\gamma_{n+1}$. First, suppose the vertical and horizontal lines of the tiling $\mathcal{T}$ are consistently oriented  upwards and leftwards, respectively. Consider the straight line segment $\gamma_{i}$, $1\leq i\leq n+1,$  with the orientation inherited from $\beta_{0}$. Let $p_{i}\in \mathbb{Z}$ be the signed intersection number between $\gamma_{i}$ and the  vertical lines of  $\mathcal{T}$ according to a right hand convention. If $q_{i}\in \mathbb{Z}$ is the corresponding signed intersection number between $\gamma_{i}$ and the horizontal lines of $\mathcal{T}$, we get the pair of integers $(p_{i},q_{i})$ describing the segment  $\gamma_{i}$, and we shall call it the \emph{slope} of $\gamma_{i}$. The name slope suggests that the number $p_{i}/q_{i}\in\mathbb{Q}\cup\{\infty\}$ is close to the slope of the segment  $\gamma_{i}$ in the plane, in fact, it corresponds to the slope of the segment in the plane connecting the points $w_{i-1}$ and $w_{i}$ (the segment $\lambda_{i}$ in Figure \ref{fig:b4}).  \par
For $i\in\{1,2,\dots,n\}$, consider the subcurve of $\beta_{0}$, $\delta_{i}\subset \partial D_{\epsilon}(w_{i})$. There exists a non-negative integer $m_{i}$ such that the angle $r_{i}$ covered by $\delta_{i}$ around $w_{i}$ satisfies $(m_{i}-1)\pi < |r_{i}|\leq m_{i}\pi$. We assign to $\delta_{i}$ the integer $m_{i}$ if it turns counterclockwise around $\partial D_{\epsilon}(w_{i})$  and $-m_{i}$ otherwise. We shall call this integer the \emph{winding number} of $\delta_{i}$ around $w_{i}$. \par
From the decomposition $\beta_{0}=\gamma_{1}\cup\delta_{1}\cup\gamma_{2}\cup\dots\cup\delta_{n}\cup\gamma_{n+1}$ we get the parameterization $(p_{1},q_{1},m_{1}, p_{2},q_{2},\dots, m_{n},p_{n+1},q_{n+1})$, which represent the ordered sequence of slopes and winding numbers of the subcurves of $\beta_{0}$. It is possible to recover the curve $\beta_{0}$ from the ordered sequence of integers and therefore the homotopy class of $\beta$. Moreover, since we required the curve $\beta_{0}$ to be simplified, this representation of the homotopy class of $\beta$ is well-defined and canonical. 
\end{proof}
Note that that $m_{i}=0$ only if $\delta_{i}$ is a point and $\gamma_{i}\cup \gamma_{i+1}$ is a line segment tangent to $\partial D_{\epsilon}(w_{i})$ at $\delta_{i}$. In case that one of $p_{i}$ or $q_{i}$ is equal to zero for some $i$, then the other integer must be $\pm 1$. It occurs $p_{i}=\pm q_{i}$ only if $p_{i},q_{i}\in\{1,-1\}$. In case $p_{i},q_{i}\neq 0 $ and $p_{i}\neq\pm q_{i}$, it follows that they are relatively prime. As an example, the parameterization of the homotopy class of the curve $\beta$ in Figure \ref{fig:b2} is $(2,1,1,1,1,1,0,1,1,-1,1,-1,-1,3,3,-1,-1,-2,-1,1,1,0,1,-3,1)$.


\section{A parameterization of $(1,1)$-knots}\label{sec:3}
%

A \emph{$(g,1)$-knot} in $\mathbb{S}^{3}$ is a knot that admits a $1$-bridge presentation with respect to a genus-$g$ Heegaard surface  $\Sigma^{g}\subset\mathbb{S}^{3}$. Equivalently, if $K$ is a $(g,1)$-knot, then it is possible to embed it in a product $\Sigma^{g}\times [0,1]$, and if $\rho$ is the projection of the manifold $\Sigma^{g}\times [0,1]$ onto the factor $I=[0,1]$, then the restriction of $\rho$ to the submanifold $K$ has only one maximum $y_{1}$ and one minimum $y_{0}$ with values $1$ and $0$, respectively.  The points $y_{0}$ and $y_{1}$ segment $K$ into two arcs $A_{0}$ and $A_{1}$. For each $t\in [0,1]$, let $\Sigma^{g}_{t}$ be the level surface $\Sigma^{g}\times\{t\}\subset \Sigma^{g}\times [0,1]$, say $\Sigma^{g}=\Sigma^{g}_{1/2}$. Each one of the arcs $A_{0}$ and $A_{1}$ intersects transversely the level surface $\Sigma^{g}_{t}$ in one point for every $t\in (0,1)$. Suppose we isotope the knot $K$, preserving the bridge position, such that $A_{0}$ is a straight arc in $\Sigma^{g}\times I$, that is to say, $A_{0}= \{w_{0}\}\times I$ for some fixed point $w_{0}\in \Sigma^{g}$. In this case, we say that $K$ is in \emph{straight bridge position} with respect to $\Sigma^{g}$. If $\pi:\Sigma^{g}\times I\rightarrow \Sigma^{g}$ is the projection onto the surface, then $\pi(A_{0})=\{w_{0}\}$, while  $\pi(A_{1})\subset \Sigma^{g}$ is a  curve intersecting $\{w_{0}\}$ only at its endpoints. \par

Now we restrict to the case $g=1$, and to simplify notation we shall use $T$ to refer the genus-$1$ Heegaard surface $\Sigma^{1}$. Suppose $K=A_{0}\cup A_{1}$ is a $(1,1)$-knot which is in straight bridge position with respect to $T$. Let $\mu$ and $\lambda$ form a meridian-longitude curve system for $T$, that is to say, both are simple closed curves intersecting each other in one point, and each one of this curves bounds a meridian disk in one of the two solid torus in the complement of $T$ in the 3-sphere. 
 The space we get  after cutting the torus $T$ along the curves $\mu$ and $\lambda$ can be modeled by a unitary square $Q$ with paired opposite edges which correspond to the curves $\lambda$ and $\mu$. We can assume that $w_{0}$ is the central point of $Q$. Let $\tilde{T}$ be the universal covering space of $T$, tiled with copies of $Q$ and corresponding covering map $\varphi:\tilde{T}\rightarrow T$. Namely, we represent $\tilde{T}$ by a plane with the unitary square tiling $\mathcal{T}$ as in Section \ref{sec:2}.   \par

\begin{teo}\label{teo:31} Let $K$ be a  $(1,1)$-knot. Then $K$ is parameterized by an ordered sequence of integers $(p_{1}, q_{1}, m_{1}, p_{2}, q_{2},\ldots,m_{n},p_{n+1},q_{n+1})$, for some $n\geq 0$, such that $p_{i}$ and $q_{i}$ are not both zero, $i=1,\dots,n+1$.
\end{teo}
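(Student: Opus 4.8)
The plan is to reduce Theorem \ref{teo:31} to Proposition \ref{teo:p1} by producing, from a $(1,1)$-knot $K$ in straight bridge position, a canonical arc in a multipunctured plane and then invoking the parameterization already established. First I would fix $K=A_0\cup A_1$ in straight bridge position with respect to the Heegaard torus $T$, so that $A_0=\{w_0\}\times I$ and $\pi(A_1)$ is a curve on $T$ meeting $w_0$ only at its endpoints. Lifting through the universal cover $\varphi:\tilde T\to T$, I would choose a lift $\tilde\beta$ of $\pi(A_1)$ starting at a fixed preimage of $w_0$; since $\tilde T$ is the plane tiled by copies of $Q$ with $w_0$ the center of each tile, the preimages $\varphi^{-1}(w_0)$ are exactly the points of the set $W$ from Section \ref{sec:2}. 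Thus $\tilde\beta$ is naturally an arc in the plane whose endpoints lie over $w_0$, and by excising small disks $D_\epsilon(w)$ around every $w\in W$ (these disks are where the arc must avoid the over-strand $A_0$ and its translates) I obtain a smooth arc $\beta\subset\mathfrak B_\epsilon$ with $\partial\beta\subset\partial D_\epsilon(w_0)\cup\partial D_\epsilon(w_0')$, exactly the kind of arc treated in Proposition \ref{teo:p1}.

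Next I would argue that this assignment $K\mapsto[\beta]$ is well-defined on the level of homotopy classes and that the homotopy class of $\beta$ determines $K$. The key point is that in straight bridge position the bridge arc $A_1$ is an unknotted (trivial) arc in the handlebody, so $A_1$ is recovered up to isotopy rel endpoints from the homotopy class of its projection $\pi(A_1)$ together with the over/under data, and the latter is encoded geometrically by keeping the lift in the \emph{punctured} plane rather than the full plane. Isotopies of $K$ preserving the straight bridge position translate into homotopies of $\pi(A_1)$ rel $w_0$, hence into homotopies of $\beta$ in $\mathfrak B_\epsilon$ with endpoints on the boundary circles; conversely a homotopy of $\beta$ lifts back to an isotopy of $A_1$. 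With this correspondence in hand, Proposition \ref{teo:p1} supplies a canonical finite integer sequence $(p_1,q_1,m_1,\dots,m_n,p_{n+1},q_{n+1})$ for the simplified minimal-length representative $\beta_0$, and transporting it back gives the asserted parameterization of $K$.

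I would then read off what the invariants mean for $K$: the slope $(p_i,q_i)$ of each straight segment $\gamma_i$ records how the bridge arc winds around the meridian and longitude of $T$ between successive close approaches to the puncture lattice, and the winding number $m_i$ of $\delta_i$ records how the arc wraps around a lattice point $w_i$, which is precisely the crossing/linking data of $A_1$ with the translates of the straight arc $A_0$. The constraint that $p_i$ and $q_i$ are not both zero is inherited directly from Proposition \ref{teo:p1}, since a segment $\gamma_i$ cannot be degenerate. In this way the statement of Theorem \ref{teo:31} is obtained verbatim from the already-proved Proposition once the dictionary between straight-bridge $(1,1)$-knots and arcs in $\mathfrak B_\epsilon$ is in place.

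The main obstacle I anticipate is establishing the well-definedness and injectivity of the dictionary $K\leftrightarrow[\beta]$ with full rigor, rather than the purely combinatorial step. Concretely, one must verify that every isotopy class of $(1,1)$-knot has a representative in straight bridge position (so the lattice picture applies) and that two arcs $\beta,\beta'$ arising from isotopic knots are homotopic in $\mathfrak B_\epsilon$ as arcs with endpoints on the removed boundary circles; the delicate direction is showing that the punctures faithfully record the over--under information so that distinct knots cannot collapse to the same homotopy class. I expect this to require a careful analysis of how the upper strand $A_1$ projects around $w_0$ and its translates, together with the trivial-tangle structure of the bridge arcs, to rule out the possibility that a homotopy in the punctured plane corresponds to an illegal isotopy that passes $A_1$ through $A_0$. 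Once that faithfulness is secured, the remainder of the proof is a direct application of Proposition \ref{teo:p1}.
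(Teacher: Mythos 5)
You follow the same route as the paper---put $K$ in straight bridge position, lift $\pi(A_1)$ to the universal cover, pass to the punctured plane, and invoke Proposition \ref{teo:p1}---but the step you flag as the ``main obstacle'' and leave open is precisely the step the paper's proof actually carries out, and the mechanism you propose for filling it (recovering over/under data from the punctures, analyzing the trivial-tangle structure of the bridge arcs) is not the right one. The paper's resolution is short: parameterize $A_1$ by a smooth $\alpha:I\to T\times I$ with $\alpha(t)\in T\times\{t\}$, i.e.\ the arc is graphical over the level coordinate. Then any homotopy $\tilde H$ in $\mathfrak{B}_\epsilon$ between the lifted curve $\bar\beta$ and its minimal-length representative $\beta_0$ induces the level-preserving deformation $H(t,s)=(\varphi\circ\tilde H(t,s),t)$. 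At every stage $s$ the resulting curve is transversal to the level tori, hence meets each level once and is embedded, and it stays disjoint from $A_0=\{w_0\}\times I$ because the homotopy never enters the removed disks; so $H$ is automatically an isotopy of the arc. No over/under bookkeeping is needed: monotonicity in $t$ resolves every self-crossing of the projection (the strand at the larger level is always the one above), and the punctures record nothing more than disjointness from $A_0$. Without this observation or a substitute, your assertion that ``a homotopy of $\beta$ lifts back to an isotopy of $A_1$'' is exactly the unproven claim, so the proposal as written has a genuine gap.

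You also set yourself a stronger goal than the theorem requires. Theorem \ref{teo:31} is an existence statement: every $(1,1)$-knot admits a straight bridge position, which yields a sequence, and any sequence arising this way determines the knot (one reconstructs $\beta_0$, hence $\alpha_0$, hence $K$). It does not assert that the sequence is independent of the chosen presentation, i.e.\ well-definedness or injectivity of $K\mapsto[\beta]$ across different straight bridge positions. That uniqueness is proved in the paper only for satellite $(1,1)$-knots (Theorem \ref{teo:up}) and is otherwise left open (Conjecture \ref{teo:cu}); indeed the paper states explicitly that it is not clear when two different sequences produce the same knot. So the ``delicate direction'' you anticipate is partly unnecessary for this theorem and partly not provable with the tools at hand.
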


\begin{proof}

Consider the following projections onto the torus $T$ that were described before. \par

\[
\begin{tikzcd}
    T\times I \arrow[swap]{dr}{\pi} & \tilde{T} \arrow{d}{\varphi} \\
     & T
\end{tikzcd}
\]
Suppose that $K=A_{0}\cup A_{1}$ is a straight bridge position with respect to $T$ and  the arc $A_{1}$ is parameterized by a smooth function $\alpha:I\rightarrow T\times I$ such that $\alpha(t)\in T\times\{t\}$ for every $t\in I$. Let $\tilde{w}_{0}\in \varphi^{-1}(w_{0})$ be a fixed point and let $\beta: I\rightarrow\tilde{T}$ be the lift of the curve $\pi\circ\alpha$ starting at $\tilde{w}_{0}$ and ending at some point $\tilde{w}_{1}\in\varphi^{-1}(w_{0}) $ such that $\pi\circ \alpha(t)=\varphi\circ\beta(t)$ for every $t$. From Proposition \ref{teo:p1} it follows that there exists $\epsilon>0$ such that in the $\epsilon$-punctured plane $\mathfrak{B}_{\epsilon}\subset\tilde{T}$, where the punctures are centered at the points in $\varphi^{-1}(w_{0})$, the shortest curve $\beta_{0}$ in the homotopy class of the restriction of $\beta$ to $\mathfrak{B}_{\epsilon}$ is simplified. Let $\bar{\beta}$ be the restriction of $\beta$ to $\mathfrak{B}_{\epsilon}$, which is defined in an interval $\bar{I}=[\mu_{1},1-\mu_{2}]$ for some $\mu_{1},\mu_{2}>0$, sufficiently small. Let $\tilde{H}: \bar{I}\times I\rightarrow \mathfrak{B}_{\epsilon}$ be a homotopy between the curves  $\bar{\beta}$ and $\beta_{0}$ in $\mathfrak{B}_{\epsilon}$, such that $\tilde{H}(t,0)=\bar{\beta}(t)$ and $\tilde{H}(t,1)=\beta_{0}(t)$. Define the continuous functions $\alpha_{0}:\bar{I}\rightarrow T\times I$ by $\alpha_{0}(t)=(\varphi\circ \beta_{0}(t),t)$, and   $H:\bar{I}\times I\rightarrow T\times I$ by $H(t,s)= (\varphi\circ \tilde{H}(t,s),t)$. The function $H$ is a homotopy between $\bar{\alpha}$, the restriction of $\alpha$ into $(T\setminus D_{\epsilon}(w_{0}))\times I$, and the curve $\alpha_{0}$, which projects under $\pi$ onto the projection of $\beta_{0}$ under $\varphi$. Moreover, since the image of the curve $\bar{\alpha}$ along the homotopy is always transversal to the level tori in $T\times I$, $H$ is in fact an isotopy between $\bar{\alpha}$ and $\alpha_{0}$.\par
A parameterization for the knot $K$ is given by the parameterization $(p_{1}, q_{1}, \\m_{1}, p_{2}, q_{2},\ldots,m_{n},p_{n+1},q_{n+1})$ of the homotopy class of the curve $\beta_{0}$ in $\mathfrak{B}_{\epsilon}$ as shown in   Proposition \ref{teo:p1}.
\end{proof}

If $K=A_{0}\cup A_{1}$ is a $(1,1)$-knot in straight bridge position with respect to $T$ and $A_{1}$ is represented by a smooth function $\alpha_{0}$ realizing the parameterization of Theorem \ref{teo:31}, we shall say that the presentation (or position) of the knot is \emph{tight} and the parameterization induced will be called a \emph{tight parameterization} of $K$, in reference to the minimal-length property of the associated curve in the multipunctured plane that induces the parameterization. \par
Note that not every sequence of integers $(p_{1}, q_{1}, m_{1}, p_{2}, q_{2},\ldots,m_{n},p_{n+1},q_{n+1})$ describes a minimal-length curve in the multipunctured plane, but if it does then there is only one $(1,1)$-knot associated to this sequence of integers according to the relation between $(1,1)$-knots and curves in the multipunctured plane described in the proof of the previous theorem. However, it is not clear when two different sequences of integers produce the same knot. In the following section we will see that in the case of satellite $(1,1)$-knots the proposed parameterization is essentially unique for each knot in the family.\par
The parameterization of $(1,1)$-knots that we have introduced has an unbounded number of parameters in contrast with other  parameterizations of 1-bridge torus knots (see for instance Sections 3 and 4 from \cite{CK}, where a parameterization for $(1,1)$-knots requires four integers and a sign). It is not clear how this  new parameterization relates to classical presentations of 1-bridge torus knots, such as the Schubert's or Conway's normal forms (see \cite{CK}), or the mapping class group of the twice punctured torus (as described in \cite{CM}).

\section{The case of satellite $(1,1)$-knots}\label{sec:4}

K. Morimoto and M. Sakuma \cite{MS} introduced the useful description of satellite $(1,1)$-knots as satellites of torus knots with rational-link patterns. Let $K_{0}$ be a non-trivial $(p,q)$-torus knot in $\mathbb{S}^3$, and let $K_{1}\cup K_{2}$ be a rational link of type $(\alpha,\beta)$, $\alpha \geq 4$, in $\mathbb{S}^{3}$. Consider the orientation preserving homeomorphism $\varphi: E(K_{1})\rightarrow N(K_{0})$ which takes a meridian $m\subset \partial E(K_{1})$ of $K_{1}$ to a fiber $l\subset\partial N(K_{0})=\partial E(K_{0})$ of the Seifert fibration $D(-r/p,s/q)$ of $E(K_{0})$. The knot $\varphi(K_{2})\subset N(K_{0})\subset \mathbb{S}^{3}$ is a satellite $(1,1)$-knot that will be denoted by $K(\alpha,\beta;p,q)$, and every satellite $(1,1)$-knot admits one of these representations.\par

Before we proceed with the parameterization of satellite $(1,1)$-knots, we present a brief reminder of continued fractions. Given a finite sequence of non-zero integers $\{a_{i}\}_{i=1}^{n}$, we produce the continued fraction  $[a_{1},a_{2},\ldots,a_{n}]:=a_{1}+(a_{2}+(\cdots + a_{n}^{-1})^{-1}\cdots)^{-1}$, which can be simplified to a rational number $p_{n}/q_{n}$, where $p_{n}$ and $q_{n}$ are relatively prime. Truncating the sequence at $a_{k}$, $k\leq n$, and simplifying the truncated continued fraction produces the \textit{$k$-th convergent} $p_{k}/q_{k}=[a_{1},\ldots,a_{k}]$. The general expressions of the fist convergents are $a_{1}/1$, $(a_{1}a_{2}+1)/a_{2}$, $[a_{1}(a_{2}a_{3}+1)+a_{3}]/(a_{2}a_{3}+1)$, and so forth; it is easy to prove by induction that these expressions are simplified, namely, the numerator and denominator in each expression are relatively prime. The following Lemma concerning convergents of continued fractions is well known and can be proven by induction:
\begin{lem}\label{lem:41} If $p_{k}/q_{k}$ corresponds to the $k$-th convergent of $[a_{1},a_{2},\ldots,a_{n}]$, then:
\begin{itemize}
\item[(i)] $p_{k+1}=a_{k+1}p_{k}+p_{k-1}$ and $q_{k+1}=a_{k+1}q_{k}+q_{k-1}$ for $2\leq k\leq n-1$.
\item[(ii)] $p_{k}q_{k+1}-p_{k+1}q_{k}=(-1)^{k}$ for $k\leq n-1$. 
\end{itemize}
\end{lem}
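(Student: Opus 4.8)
The plan is to prove both statements by induction on $k$, using the defining recurrence for convergents together with the base cases supplied by the explicit expressions for the first few convergents. I would set up the two claims so that (i) feeds into (ii): since the recurrence in (i) expresses $p_{k+1},q_{k+1}$ in terms of the two previous convergents, the determinant identity in (ii) can be verified by substituting (i) into $p_{k}q_{k+1}-p_{k+1}q_{k}$ and collapsing it to the previous determinant with a sign flip.

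First I would establish (i). The standard approach is to introduce the matrix or ``mediant'' description of a convergent: write
\[
\frac{p_{k}}{q_{k}} = [a_{1},\ldots,a_{k}],
\]
and observe that passing from $[a_{1},\ldots,a_{k}]$ to $[a_{1},\ldots,a_{k},a_{k+1}]$ amounts to replacing the last entry $a_{k}$ by $a_{k}+1/a_{k+1}$. The cleanest route is to record that each convergent is obtained by a product of the elementary $2\times2$ matrices
\[
\begin{pmatrix} a_{i} & 1 \\ 1 & 0 \end{pmatrix},
\]
so that
\[
\begin{pmatrix} p_{k} & p_{k-1} \\ q_{k} & q_{k-1} \end{pmatrix}
=
\begin{pmatrix} a_{1} & 1 \\ 1 & 0 \end{pmatrix}
\cdots
\begin{pmatrix} a_{k} & 1 \\ 1 & 0 \end{pmatrix}.
\]
Multiplying this on the right by the next elementary matrix and reading off the top-left and bottom-left entries gives exactly $p_{k+1}=a_{k+1}p_{k}+p_{k-1}$ and $q_{k+1}=a_{k+1}q_{k}+q_{k-1}$. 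Alternatively, one checks the base case $k=2$ directly against the explicit third convergent listed in the excerpt and then argues inductively that appending $a_{k+1}$ and simplifying reproduces the recurrence; I would present whichever is shorter, but the matrix bookkeeping makes (ii) almost immediate, so I lean toward it.

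For (ii), with the recurrence in hand the computation is short: using (i),
\[
p_{k}q_{k+1}-p_{k+1}q_{k}
= p_{k}(a_{k+1}q_{k}+q_{k-1}) - (a_{k+1}p_{k}+p_{k-1})q_{k}
= p_{k}q_{k-1}-p_{k-1}q_{k}
= -\bigl(p_{k-1}q_{k}-p_{k}q_{k-1}\bigr),
\]
so each such determinant is the negative of its predecessor, and induction from the base case $p_{1}q_{2}-p_{2}q_{1} = a_{1}a_{2}-(a_{1}a_{2}+1) = -1$ (which matches $(-1)^{1}$) yields $p_{k}q_{k+1}-p_{k+1}q_{k}=(-1)^{k}$. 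Equivalently, this is just the statement that the determinant of the matrix product above is the product of the determinants $\det\!\bigl(\begin{smallmatrix} a_{i} & 1 \\ 1 & 0\end{smallmatrix}\bigr)=-1$.

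The only genuine subtlety, and the step I expect to require the most care, is the bookkeeping around the index range and the hypothesis that the $a_{i}$ are non-zero: I must make sure the recurrence in (i) is asserted only for $2\le k\le n-1$ (so that both $p_{k-1}$ and $p_{k+1}$ are defined), set up consistent conventions $p_{0},q_{0}$ if I use them to seed the induction, and confirm that the explicit low-order convergents quoted in the paper agree with the matrix normalization I adopt. None of this is hard, but getting the off-by-one indexing and the initial determinant sign exactly right is where an otherwise routine induction can go astray.
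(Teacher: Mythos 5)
Your proposal is correct: the matrix bookkeeping (together with the substitution observation that appending $a_{k+1}$ means replacing $a_{k}$ by $a_{k}+1/a_{k+1}$, which drives the induction step) gives (i), and your computation of $p_{k}q_{k+1}-p_{k+1}q_{k}=-(p_{k-1}q_{k}-p_{k}q_{k-1})$ from the base case $p_{1}q_{2}-p_{2}q_{1}=-1$ gives (ii), consistent with the paper's explicit low-order convergents. The paper offers no written proof at all --- it simply declares the lemma well known and provable by induction --- so your argument is precisely the standard induction the paper has in mind, carried out in full.
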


The following theorem is known as the Palindrome Theorem (see \cite{KL}, Theorem 4) and will be useful in our further analysis. For completeness we present an elementary proof.

\begin{pr}[Palindrome Theorem]\label{pr:pt} If $p_{k}/q_{k}$ is the $k$-th convegent of $[a_{1},a_{2},\ldots,a_{n}]$, $k\leq n$, then the reversed continued fraction $[a_{k},a_{k-1},\ldots,a_{2},a_{1}]$ equals $p_{k}/p_{k-1}$ and $q_{k}p_{k-1}\equiv (-1)^{k-1}\mod |p_{k}|$.
\end{pr}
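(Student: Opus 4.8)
The plan is to encode the convergents in $2\times 2$ integer matrices and then extract both assertions from the symmetry of the generating matrices under transposition. Set $M_{i}=\begin{pmatrix} a_{i} & 1 \\ 1 & 0\end{pmatrix}$ and adopt the standard initial data $p_{0}=1$, $q_{0}=0$, $p_{-1}=0$, $q_{-1}=1$, so that the recurrence of Lemma \ref{lem:41}(i), written in the backward form $p_{k}=a_{k}p_{k-1}+p_{k-2}$ and $q_{k}=a_{k}q_{k-1}+q_{k-2}$, holds for all relevant $k$. First I would prove by induction on $k$ that
\[
M_{1}M_{2}\cdots M_{k}=\begin{pmatrix} p_{k} & p_{k-1} \\ q_{k} & q_{k-1}\end{pmatrix}.
\]
The base case $k=1$ is immediate from $p_{1}/q_{1}=a_{1}/1$, and the inductive step is a one-line matrix multiplication reproducing exactly the two recurrences above.

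For the first assertion I would transpose this identity. Each $M_{i}$ is symmetric, so $(M_{1}\cdots M_{k})^{\mathrm{T}}=M_{k}\cdots M_{1}$, and the right-hand product is precisely the matrix attached to the reversed sequence $[a_{k},a_{k-1},\ldots,a_{1}]$; by the identity of the previous step (applied to that sequence) its first column records the numerator and denominator of the reversed continued fraction. On the other hand, transposing the displayed matrix gives $\begin{pmatrix} p_{k} & q_{k} \\ p_{k-1} & q_{k-1}\end{pmatrix}$, whose first column is $(p_{k},p_{k-1})^{\mathrm{T}}$; hence $[a_{k},\ldots,a_{1}]=p_{k}/p_{k-1}$. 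To read this off as the reduced convergent rather than merely a proportional pair, I would separately note $\gcd(p_{k},p_{k-1})=1$, which follows by descent from $p_{k}=a_{k}p_{k-1}+p_{k-2}$ down to $\gcd(p_{0},p_{-1})=1$.

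The congruence comes from the determinant. Since $\det M_{i}=-1$, one has $\det(M_{1}\cdots M_{k})=(-1)^{k}$, that is $p_{k}q_{k-1}-p_{k-1}q_{k}=(-1)^{k}$, which is just Lemma \ref{lem:41}(ii) after re-indexing. Reducing this relation modulo $|p_{k}|$ annihilates the term $p_{k}q_{k-1}$ and leaves $-p_{k-1}q_{k}\equiv(-1)^{k}$, i.e. $q_{k}p_{k-1}\equiv(-1)^{k-1}\pmod{|p_{k}|}$, as claimed.

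The main obstacle is purely bookkeeping: Lemma \ref{lem:41} states the recurrence in the forward direction for the restricted range $2\leq k\leq n-1$, so I must re-index it and verify the first cases by hand against the explicit convergents $a_{1}/1$ and $(a_{1}a_{2}+1)/a_{2}$ listed in the text, fixing the conventions for $p_{0},p_{-1},q_{0},q_{-1}$ so that the matrix identity and its base case are consistent. A self-contained alternative avoiding matrices is a direct induction: writing $R_{k}=[a_{k},\ldots,a_{1}]$ one has $R_{k}=a_{k}+1/R_{k-1}$, and substituting $R_{k-1}=p_{k-1}/p_{k-2}$ together with $p_{k}=a_{k}p_{k-1}+p_{k-2}$ yields $R_{k}=p_{k}/p_{k-1}$ at once; the congruence is then obtained exactly as above from Lemma \ref{lem:41}(ii). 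In either route one should note that degenerate cases with $p_{k}=0$ make the stated congruence trivial and require no separate treatment.
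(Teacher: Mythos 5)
Your argument is correct, and its main route is genuinely different from the paper's. The paper proves the proposition by induction on $k$ on the reversal identity itself: assuming $[a_{k},\ldots,a_{1}]=p_{k}/p_{k-1}$, it computes $[a_{k+1},\ldots,a_{1}]=a_{k+1}+p_{k-1}/p_{k}=(a_{k+1}p_{k}+p_{k-1})/p_{k}=p_{k+1}/p_{k}$ via Lemma \ref{lem:41}(i), and reads the congruence directly off Lemma \ref{lem:41}(ii); this is exactly the ``self-contained alternative'' you sketch in your final paragraph, so that variant coincides with the paper's proof. Your primary route instead establishes by induction the matrix identity $M_{1}\cdots M_{k}=\left(\begin{smallmatrix} p_{k} & p_{k-1}\\ q_{k} & q_{k-1}\end{smallmatrix}\right)$ and then obtains the palindrome property from symmetry: each $M_{i}$ equals its transpose, so transposing the product reverses the order of the factors, and comparing first columns (an equality of integer pairs, not merely of ratios) yields $[a_{k},\ldots,a_{1}]=p_{k}/p_{k-1}$, while the congruence falls out of $\det M_{i}=-1$ and multiplicativity of the determinant. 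What your approach buys: it explains conceptually why reversal works (it is literally transposition), it re-derives Lemma \ref{lem:41}(ii) rather than assuming it, and it hands you $\gcd(p_{k},p_{k-1})=1$ as a byproduct, so the fraction $p_{k}/p_{k-1}$ is visibly the reduced form of the reversed continued fraction. What it costs is the bookkeeping you correctly flag: the recurrence of Lemma \ref{lem:41}(i) is stated only for $2\leq k\leq n-1$, so you must anchor the matrix identity with the conventions $p_{0}=1$, $q_{0}=0$, $p_{-1}=0$, $q_{-1}=1$ and check the low cases against the explicit convergents $a_{1}/1$ and $(a_{1}a_{2}+1)/a_{2}$, whereas the paper sidesteps this by starting its induction at the explicit case $k=2$. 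Both arguments ultimately rest on the same recurrence, and both are complete.
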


\begin{proof} We proceed by induction on $k$. For $k=2$, the result is obvious. Suppose that $p_{k}/p_{k-1}=[a_{k},a_{k-1},\ldots,a_{1}]$ and $q_{k}p_{k-1}\equiv (-1)^{k-1}\mod |p_{k}|$, for some $k\geq 2$. Then

$$ [a_{k+1},\ldots,a_{1}] = a_{k+1}+ \frac{p_{k-1}}{p_{k}}= \frac{a_{k+1}p_{k}+p_{k-1}}{p_{k}}=\frac{p_{k+1}}{p_{k}}$$
where the last equality follows from Lemma \ref{lem:41}(i). Finally, from the part (ii) in the same lemma it follows that $q_{k+1}p_{k}-p_{k+1}q_{k}=(-1)^{k}$, and then $q_{k+1}p_{k}\equiv (-1)^{k}\mod |p_{k+1}|$. 

\end{proof}


Let $K=K(\alpha,\beta;p,q)$ be a satellite $(1,1)$-knot with rational-link pattern $K_{1}\cup K_{2}$, such that $K=\varphi(K_{2})\subset\mathbb{S}^{3}$, where $\varphi:E(K_{1})\rightarrow N(K_{0})$ is the homeomorphism between the exterior of $K_{1}$ and a regular neighborhood of the $(p,q)$-torus knot $K_{0}$ as before. Once the companion $(p,q)$-torus knot $K_{0}$ is fixed, the description of $K$ relies on the $(\alpha,\beta)$-rational link $K_{1}\cup K_{2}$, as in the analysis of the patterns developed in \cite{FE}. According to Lemma 2.1 from \cite{FE}, the rational link $K_{1}\cup K_{2}$ admits a diagram that is described by a sequence of an odd number of non-zero even integers $A=(c_{1}, d_{1},c_{2},d_{2}, \ldots, c_{n},d_{n},c_{n+1})$, corresponding to a sequence of descending crossings of the model in Figure 1 from \cite{FE}. 
Note that $c_{i}$ corresponds to crossings between the two components of $K_{1}\cup K_{2}$, while $d_{i}$ represents crossings of one of the components with itself, say $K_{2}$, for every possible $i$. It is clear that the pattern defined by the sequence $A$ corresponds to a straight bridge position for the knot $K$; furthermore, we shall see that it determines a tight presentation for $K$ (Algorithm \ref{al:1}). \par 

On the other hand, suppose $K=A_{0}\cup A_{1}$ is a tight presentation of $K$, where $A_{0}$ is an arc of the form $\{w_{0}\}\times I\subset T\times I$ for some $w_{0}$ in the standard torus $T$ as in Section \ref{sec:3}. The pattern associated to this presentation, $K_{1}'\cup K_{2}'$, can be represented by a diagram where $K_{2}'$ splits as $K_{2}'=B_{0}\cup B_{1}$, with $B_{0}$ a vertical arc corresponding to $A_{0}$ and without crossings with $K_{1}'$, while $B_{1}$ is a monotonous arc  which is winding around the component $K_{1}'$ and the arc $B_{0}$. This diagram of $K_{1}'\cup K_{2}'$ can be represented by a sequence of non-zero even integers as before. We will establish relations between all the presentations of patterns for $K$ obtained in this manner.


\begin{ob}\label{ob:1} Let $\alpha/\beta=[c_{1}, d_{1},c_{2}, \ldots, d_{n},c_{n+1}]$, where $c_{i}$ and $d_{j}$ are non-zero even numbers, for every possible $i$ and $j$.
\begin{itemize}
\item[(i)] $|\alpha|\geq |\beta|$, since $|c_{1}|\geq 2$.
\item[(ii)]The sequence of non-zero even numbers $(c_{1}, d_{1},c_{2}, \ldots, d_{n},c_{n+1})$ is unique for $(\alpha,\beta)$. This follows from the proof of Lemma 2.1 in \cite{FE}, which is based on the Euclidean division algorithm.
\end{itemize}
\end{ob}

Consider a sequence of $2n+1$ even integers $A=(c_{1}, d_{1},c_{2}, \ldots, d_{n},c_{n+1})$, where $c_{i}\neq 0$ for all $i$, but it could happen $d_{j}=0$ for one or more values $j$. We describe three elementary operations on the sequence $A$: 
\begin{itemize}
\item[(i)] Suppose $|c_{i}|>2$ for some $i\in\{1,2\ldots, n+1\}$. An \textit{expansion at $c_{i}$} of $A$ will be a substitution of $c_{i}$ in the original sequence $A=(A_{1},c_{i},A_{2})$ for an alternating subsequence of length $|c_{i}|-1$ of the form $(2,0,2,\ldots,0,2)$ if $c_{i}>0$ or $(-2,0,-2,\ldots,0,-2)$ if $c_{i}<0$, to obtain a new sequence $A'=(A_{1},\pm2,0,\pm2,\ldots,0,\pm2 ,A_{2})$. If there is another value $|c_{j}|>2$ in $A$ we can proceed with another expansion on $A'$, and so forth until we obtain a sequence $A_{e}$, where no more expansions are possible, namely, if $A_{e}=(g_{1}, h_{1},g_{2}, \ldots, h_{l},g_{l+1})$, then $g_{i}\in\{2,-2\}$ for all $i$. We shall call $A_{e}$ the \emph{expanded form} of $A$.     
\item[(ii)] Suppose $A$ contains a maximal alternating subsequence of length $2k-1$, $k\geq 2$, of the form $(2,0,2,\ldots,0,2)$ or $(-2,0,-2,\ldots,0,-2)$, then the sequence $A'$ obtained after substituting this sequence for the length-$1$ subsequence $(2k)$ or $(-2k)$ in $A$, respectively, will be called a \textit{contraction} of $A$. If we continue realizing contractions until we get a sequence $A_{c}$, where no more contractions are possible, then $A_{c}$ will be called the \emph{contracted form} of $A$. Note that in a sequence $A=(c_{1}, d_{1},c_{2}, \ldots, d_{n},c_{n+1})$, where $c_{i},d_{j}\neq 0$ for every $i$ and $j$, then $(A_{e})_{c}=A$.       
\item[(iii)] Consider a sequence of $2n+1$ non-zero even numbers $A=(c_{1}, d_{1},c_{2}, \ldots, d_{n},c_{n+1})$. Let $A_{e}=(g_{1}, h_{1},g_{2}, \ldots, h_{l},g_{l+1})$ be the expanded form of $A$. Define the transformation $f$ on $A_{e}$ to obtain the sequence $f(A_{e})$ as follows: change in $A_{e}$ each value $g_{i}$ for $-g_{i}$, and change the value $h_{i}$ for $h_{i}+(g_{i}+g_{i+1})/2$. Note that $f(A_{e})$ is a sequence in expanded form of length $2l+1$. The contracted form of $f(A_{e})$, denoted by $(f(A_{e}))_{c}$, will be called the \textit{sequence associated} to $A$. 
\end{itemize} 

 
We show how the operations on sequences described before are related to the study of rational links. Let $K_{1}\cup K_{2}$ be the two-components rational link  represented by a diagram with a sequence of non-zero even integers $(c_{1}, d_{1},c_{2}, \ldots, c_{n},d_{n},c_{n+1})$ as before. Suppose that $K_{2}=B_{0}\cup B_{1}$, where $B_{0}$ is a vertical arc, while $B_{1}$ is a monotonous arc which is winding around the component $K_{1}$ and the arc $B_{0}$. Under these assumptions, we have the following result:


\begin{lem}\label{lem:as} Let $K_{1}\cup K_{2}$ be a rational two-components link represented by a diagram with a sequence of non-zero even integers $A=(c_{1}, d_{1},c_{2},d_{2}, \ldots, c_{n},d_{n},c_{n+1})$. The link represented by the sequence of integers associated to $A$, $A'=(f(A_{e}))_{c}$, is isotopic to $K_{1}\cup K_{2}$. 
\end{lem}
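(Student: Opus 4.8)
The plan is to establish the isotopy in two stages: first reduce the claim to sequences already in expanded form, and then realize the transformation $f$ by an explicit ambient isotopy of the standard descending diagram of Figure 1 in \cite{FE}.

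First I would record that the passage between a sequence and its expanded (or contracted) form changes nothing at the level of the link. A block $c_i$ records $|c_i|/2$ consecutive full twists of $B_1$ about the companion component $K_1$; expanding it into the alternating string $(\pm2,0,\pm2,\ldots,0,\pm2)$ of length $|c_i|-1$ only inserts $|c_i|/2-1$ trivial (zero) self-crossing markers between these full twists, which leaves the underlying diagram literally unchanged, and the same holds for contraction, its inverse. Hence the links of $A$ and $A_e$ coincide, as do those of $f(A_e)$ and $(f(A_e))_c=A'$, and it suffices to exhibit an isotopy between the diagrams of $A_e=(g_1,h_1,\ldots,h_l,g_{l+1})$ and $f(A_e)$, both now in expanded form with every $g_i\in\{2,-2\}$.

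The core of the argument is the geometric meaning of $f$. In the descending diagram the winding arc $B_1$ runs monotonically, alternately producing a double crossing $g_i=\pm2$ with $K_1$ and a self-crossing block $h_i$ with the vertical arc $B_0$. The claim I would prove is that sweeping the $i$-th double crossing of $B_1$ with $K_1$ across $B_0$ is a flype-type isotopy that reverses its sign, $g_i\mapsto-g_i$, while depositing one crossing $g_i/2=\pm1$ into each of the two adjacent self-crossing slots; at the two ends of $B_1$ the outward deposit is swallowed by a small isotopy near the bridge endpoints, where crossings can be created or cancelled freely. Performing this move at every block reverses all inter-component crossings and, in the slot $h_i$ between $g_i$ and $g_{i+1}$, accumulates the two contributions $g_i/2$ and $g_{i+1}/2$, for a total change of $(g_i+g_{i+1})/2\in\{-2,0,2\}$. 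This is exactly the prescription of operation (iii): the corrections add when $g_i$ and $g_{i+1}$ share a sign and cancel when they disagree. I would also check that the move carries $B_1$--$K_1$ crossings to $B_1$--$K_1$ crossings and $B_1$--$B_0$ crossings to $B_1$--$B_0$ crossings, so that the partition into inter- and intra-component crossings, and in particular the two-component structure of the link, is preserved.

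To make this rigorous I would isolate a single block $g_j=\pm2$ together with its flanking slots and carry out the flype across $B_0$ by a short explicit sequence of Reidemeister moves, reading off the sign reversal of $g_j$ and the half-twist $g_j/2$ transferred to each side; treating the four sign-combinations of consecutive blocks and the two end blocks then assembles into the global statement. The main obstacle I anticipate is precisely this local computation: verifying that a single, uniform elementary move accounts for $f$ in every case, with the correct signs and with the end-block twists genuinely absorbable at the bridge endpoints. As an independent check I would compute the continued fraction $\alpha/\beta=[c_1,d_1,\ldots,c_{n+1}]$ of $A$ and that of $A'$ using Observation \ref{ob:1} and Lemma \ref{lem:41}, and confirm that they determine the same two-component rational link; this should corroborate the diagrammatic isotopy and guard against sign errors in the bookkeeping.
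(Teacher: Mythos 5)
Your proposal is correct in outline but takes a genuinely different route from the paper. The paper's proof is a single global move: it unwraps $B_{1}$ in descending (or ascending) direction so that at the end $B_{1}$ is the vertical arc and $B_{0}$ is the winding one --- an isotopy that \emph{swaps the roles} of $B_{0}$ and $B_{1}$ --- and then reads the sequence $f(A_{e})$ off the swapped diagram. You instead keep $B_{0}$ vertical throughout and realize $f$ clasp by clasp; your bookkeeping (interior slot $h_{i}$ receiving $g_{i}/2$ and $g_{i+1}/2$ from its two neighboring clasps, end deposits killed by a Reidemeister~I move at the bridge points, expansion/contraction leaving the diagram literally unchanged) composes exactly to the definition of $f$, and your Schubert cross-check is consistent move by move. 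Two caveats. First, the local move needs more care than the word ``flype'' suggests: a genuine flype preserves crossing signs, and literally pushing the two-strand clasp tangle across $B_{0}$ would create forbidden $K_{1}$--$B_{0}$ crossings; what works is a $180^{\circ}$ rotation of the clasp about the bridge axis in which the hook of $B_{1}$ slides around the strand of $K_{1}$ (flipping the clasp's handedness) while only the two legs of $B_{1}$ sweep across $B_{0}$, and the sweep direction --- hence the deposit sign $g_{i}/2$ --- is forced by the requirement that the legs avoid $K_{1}$, not chosen. You flag this verification as the main obstacle, and rightly so. Second, because your isotopy preserves the roles of $B_{0}$ and $B_{1}$ while the paper's exchanges them, your argument proves the lemma as stated but does not supply the ingredient the paper actually invokes later: in the proof of Theorem \ref{teo:up} the isotopy of Lemma \ref{lem:as} is cited precisely as the one swapping $B_{0}$ and $B_{1}$, which is then promoted to the isotopy of the satellite knot exchanging $A_{0}$ and $A_{1}$. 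Finally, note that your ``independent check'' is stronger than a check: proving that the continued fraction of $(f(A_{e}))_{c}$ equals $\alpha/(\beta\pm\alpha)$ and citing Schubert's classification is by itself a complete, purely algebraic proof of the stated lemma, arguably tighter than either diagrammatic sketch.
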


\begin{proof}
Suppose that the extended form of $A$ is $A_{e}=(g_{1}, h_{1},g_{2}, \ldots, h_{l},g_{l+1})$, then the extended form of $A'$ has the same length as $A_{e}$; moreover, $A'_{e}=(-g_{1}, h_{1}',-g_{2}, \ldots, h_{l}',-g_{l+1})$, and $h_{i}'=h_{i}+(g_{i}+g_{i+1})/2$. The isotopy we require is an isotopy that swaps the roles of $B_{0}$ and $B_{1}$ in $K_{2}$, namely, after the isotopy we can represent $B_{1}$ by a vertical arc and $B_{0}$ by an arc winding around $K_{1}$ and $B_{1}$. This isotopy may be accomplished unwrapping the arc $B_{1}$ in descending (or ascending) direction and the numbers in the sequence $A'$ are obtained.       
\end{proof}

\begin{al}\label{al:1}
Let $K=K(\alpha,\beta;p,q)$ be a satellite $(1,1)$-knot in a tight presentation $K=A_{0}\cup A_{1}$. Suppose that the associated rational-link pattern is $K_{1}\cup K_{2}$ is described by the sequence of non-zero even integers $A=(c_{1}, d_{1},c_{2}, \ldots, c_{n},d_{n},c_{n+1})$.  We describe an algorithm to obtain the tight parameterization of $K=A_{0}\cup A_{1}$ associated to the sequence $A$:

\begin{itemize}
\item[(1)] Obtain the extended form $A_{e}=(g_{1}, h_{1},g_{2}, \ldots, h_{l},g_{l+1})$ of $A$. Remember that all the elements in the sequence are even integers and $g_{i}\in\{2,-2\}$, for every $i$.
\item[(2)] The tight parameterization of $K$ will have length $3l+2$, where the parameters in positions $3k+1$ and $3k+2$ are $(g_{k+1}p)/2$ and $(g_{k+1}q)/2$, respectively, for $k=0,1,\ldots,l$.
\item[(3)]  If $sgn$ is the usual sign function defined by $sgn(x)=-1,0,1$ if $x<0,x=0$ or $x>0$, respectively, then the third parameter will be $sgn(g_{1})$ if $h_{1} = 0$, $h_{1}+sgn(h_{1})$ if $sgn(g_{1})=sgn(g_{2})=sgn(h_{1})$,  $h_{1}-sgn(h_{1})$ if $sgn(g_{1})=sgn(g_{2})\neq sgn(h_{1})$, and $h_{1}$ if $sgn(g_{1})\neq sgn(g_{2})$.  Analogously, the $3l$-th parameter will be $sgn(g_{1+1})$ if $h_{l} = 0$, $h_{l}+sgn(h_{l})$ if $sgn(g_{l+1})=sgn(g_{l})=sgn(h_{l})$,  $h_{l}-sgn(h_{l})$ if $sgn(g_{l+1})=sgn(g_{l})\neq sgn(h_{l})$, and $h_{l}$ if $sgn(g_{l+1})\neq sgn(g_{l})$.

\item[(4)] The $3k$-th parameter for $k=2,3,\ldots,l-1$, will be:
\begin{itemize}
\item[-] if $sgn(g_{k})=sgn(g_{k+1})$:
\begin{itemize}
\item[-] if $h_{k}=0$:
\begin{itemize}
\item[-] if $h_{k-1}\neq 0$ and $sgn(h_{k-1})\neq sgn(g_{k})$, or if $h_{k+1}\neq 0$ and $sgn(h_{k+1})\neq sgn(g_{k})$: $sgn(g_{k})$
\item[-] in other case: $0$
\end{itemize} 
\item[-] if $h_{k} \neq 0$ and $sgn(h_{k})\neq sgn(g_{k})$:
\begin{itemize}
\item[-] if  $sgn(h_{k-1})\neq sgn(h_{k})$ or  $sgn(h_{k+1})\neq sgn(h_{k})$: $g_{k}-sgn(g_{k})$
\item[-] in other case: $g_{k}-2sgn(g_{k})$
\end{itemize} 
\item[-] if $h_{k} \neq 0$ and $sgn(h_{k})=sgn(g_{k})$:
\begin{itemize}
\item[-] if $h_{k-1}\neq 0$ and $sgn(h_{k-1})\neq sgn(h_{k})$, or if $h_{k+1}\neq 0$ and $sgn(h_{k+1})\neq sgn(g_{k})$: $g_{k}+sgn(g_{k})$
\item[-] in other case: $g_{k}$
\end{itemize}
\end{itemize} 

\item[-] if $sgn(g_{k})\neq sgn(g_{k+1})$:
\begin{itemize}
\item[-] if $sgn(h_{k})=sgn(g_{k})$:
\begin{itemize}
\item[-] if $h_{k-1}\neq 0$ and $sgn(h_{k-1})\neq sgn(h_{k})$, or if $sgn(h_{k+1})\neq sgn(h_{k})$: $g_{k}$
\item[-] in other case: $g_{k}-sgn(g_{k})$
\end{itemize}
\item[-] if $sgn(h_{k})=sgn(g_{k+1})$:
\begin{itemize}
\item[-] if $h_{k+1}\neq 0$ and $sgn(h_{k+1})\neq sgn(h_{k})$, or if $sgn(h_{k-1})\neq sgn(h_{k})$: $g_{k}$
\item[-] in other case: $g_{k}-sgn(g_{k})$
\end{itemize}

\end{itemize}
\end{itemize}
\end{itemize}
\end{al}

The algorithm presented above may seem cumbersome but it is obtained from a straightforward process relating the tight presentation of $K$ with $A$. An advantage of this algorithm is  that it is ready to be implemented in a computer program. We present the main theorem in this section which shows that in the case of satellite $(1,1)$-knots, a tight parameterization  is essentially unique.

\begin{teo}\label{teo:up} Let $K=K(\alpha,\beta;p,q)$ be a satellite $(1,1)$-knot. There exist two tight parameterizations of $K$. If $K=A_{0}\cup A_{1}$ is a tight presentation of $K$, the two tight parameterizations are related by an isotopy that swaps the roles of $A_{0}$ and $A_{1}$.   
\end{teo}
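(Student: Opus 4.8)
The plan is to reduce the statement to the combinatorics of the unique even continued-fraction sequence attached to the rational-link pattern, and to identify the two tight parameterizations with the two choices of which arc of the bridge presentation is straightened. First I would fix the Morimoto--Sakuma data: as recalled above, $K=K(\alpha,\beta;p,q)$ has its companion $(p,q)$-torus knot and its rational-link type $(\alpha,\beta)$ determined by $K$. By Remark \ref{ob:1}(ii) the sequence of non-zero even integers $A=(c_{1},d_{1},\ldots,d_{n},c_{n+1})$ with $\alpha/\beta=[c_{1},d_{1},\ldots,c_{n+1}]$ is uniquely determined by $(\alpha,\beta)$. This sequence realizes a straight bridge position of $K$ in which $K_{2}=B_{0}\cup B_{1}$ with $B_{0}$ the vertical arc; applying Algorithm \ref{al:1} to $A$ produces a tight parameterization $P_{A}$ of $K$, establishing existence of one parameterization.

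Next I would produce the second parameterization by exchanging the roles of the two arcs. In the pattern $K_{1}\cup K_{2}$, the arc we straighten can instead be taken to be $B_{1}$; by Lemma \ref{lem:as} the resulting diagram of the (isotopic) link is described by the associated sequence $A'=(f(A_{e}))_{c}$. Applying Algorithm \ref{al:1} to $A'$ yields a second tight parameterization $P_{A'}$. The isotopy of Lemma \ref{lem:as} is precisely the one swapping $B_{0}\leftrightarrow B_{1}$, equivalently $A_{0}\leftrightarrow A_{1}$ in the presentation $K=A_{0}\cup A_{1}$, which is exactly the second assertion of the theorem.

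The core of the argument is uniqueness: that $P_{A}$ and $P_{A'}$ are the only tight parameterizations. Given any tight presentation $K=A_{0}\cup A_{1}$, the straight arc $A_{0}$ is one of the two arcs separated by the unique maximum and minimum of the bridge presentation, so there are exactly two choices. For a fixed choice, Theorem \ref{teo:31} parameterizes $K$ by the simplified minimal-length curve $\beta_{0}$, which by Proposition \ref{teo:p1} is canonical within its homotopy class once $\epsilon$ is small enough. I would then argue that the pattern-diagram sequence read off from such a tight presentation must be the canonical even continued-fraction sequence of $(\alpha,\beta)$: any non-zero even sequence representing the same rational link contracts to the same $A$ by Remark \ref{ob:1}(ii), and the \emph{simplified} condition forces precisely the contracted form, matching the output of Algorithm \ref{al:1}. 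Hence the parameterization attached to each of the two arc choices is exactly $P_{A}$ or $P_{A'}$, giving two in total.

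The hard part will be this last matching step: showing that the geometric minimal-length (tight) condition on $\beta_{0}$ selects exactly the canonical even continued-fraction sequence and no other diagram of the pattern. This amounts to verifying that shortening the lifted arc in $\mathfrak{B}_{\epsilon}$ under the covering corresponds to reducing the pattern diagram to its contracted form, i.e.\ that the \emph{arc reduction} and \emph{stabilization} operations of Section \ref{sec:2} are the geometric shadow of the contraction operation on even sequences; the dictionary between these two simplification processes is the technical crux. A secondary subtlety is confirming that $A$ and $A'$ are genuinely distinct, so that there are two parameterizations and not one; this follows from the sign change $g_{i}\mapsto -g_{i}$ in the definition of $f$, which fails to return $A$ except in degenerate symmetric configurations.
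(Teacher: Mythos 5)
Your existence half matches the paper: the canonical sequence $A$ gives one tight parameterization via Algorithm \ref{al:1}, and Lemma \ref{lem:as} gives the second, realized by the isotopy swapping $A_{0}$ and $A_{1}$. The gap is in your uniqueness argument, and it sits exactly where you lean on Remark \ref{ob:1}(ii). That remark says the non-zero even sequence is unique \emph{for a fixed pair} $(\alpha,\beta)$; it does not say the sequence is determined by the isotopy class of the link. By Schubert's classification, the links of types $(\alpha,\beta)$ and $(\alpha,\beta')$ are isotopic whenever $\beta'\equiv\beta \bmod \alpha$ \emph{or} $\beta\beta'\equiv 1 \bmod \alpha$, so a single link admits up to three distinct normalized fractions ($\beta$, $\beta^{-1}$, $\beta\pm\alpha$) and hence up to three distinct even sequences; the paper's own example exhibits isotopic patterns with fractions $-6766/817$ and $6766/5949$. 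So your claim that ``any non-zero even sequence representing the same rational link contracts to the same $A$'' is false, and the conclusion that each choice of straight arc pins down the sequence does not follow.

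The paper closes this hole with two inputs you never invoke. First, Saito's theorem (Theorem 1.2 of \cite{Sa}): the companion of a satellite $(1,1)$-knot is unique, hence any two rational-link patterns of $K$ are isotopic links. Without this, your opening assertion that the rational-link type is ``determined by $K$'' is unjustified (and as stated it is only true up to the Schubert equivalence). Second, a case analysis over the three possible $\beta'$: the case $\beta'=\beta\pm\alpha$ is identified, via Lemma \ref{lem:as}, with the swap of $A_{0}$ and $A_{1}$; the case $\beta\beta'\equiv 1 \bmod \alpha$ is identified, via the Palindrome Theorem (Proposition \ref{pr:pt}), with the \emph{reversed} sequence, and is then excluded because the corresponding isotopy of patterns (a $180^{\circ}$ rotation of the diagram plane followed by a $180^{\circ}$ rotation about a vertical axis) cannot be realized by an isotopy of $K$ unless $A=A'$. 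Without Schubert's classification, the Palindrome Theorem, and this exclusion, a third tight parameterization coming from the reversed sequence is not ruled out, so ``exactly two'' is not established. Relatedly, your count of ``two choices of which arc to straighten'' presupposes that all tight presentations of $K$ induce isotopic patterns, which again is Saito's theorem rather than a consequence of Theorem \ref{teo:31} or Proposition \ref{teo:p1}; and the ``hard part'' you flag (a dictionary between arc reduction/stabilization and sequence contraction) is not where the paper's difficulty lies, since Algorithm \ref{al:1} already supplies the sequence-to-parameterization translation, while the genuine crux is the link-level classification just described.
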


\begin{proof}
In Theorem 1.2 from \cite{Sa}, it was demonstrated that the only companion knot of $K$ is the non-trivial $(p,q)$-torus knot. This implies that if $K$ has two rational-link patterns $K_{1}\cup K_{2}$ and $K_{1}'\cup K_{2}'$, then these two links must be isotopic. According to Lemma 2.1 in \cite{FE}, we can choose a rational-link pattern for $K$, $K_{1}\cup K_{2}$, which is represented by sequence of non-zero even integers $A=(c_{1}, d_{1},c_{2}, \ldots, c_{n},d_{n},c_{n+1})$. We can assume $\alpha/\beta=[c_{1}, d_{1},c_{2}, \ldots, c_{n},d_{n},c_{n+1}]$, where $\alpha>0$, then $\alpha>|\beta|$ as seen in Remark \ref{ob:1}. Remember that the sequence $A$ defines a tight presentation of the knot $K=A_{0}\cup A_{1}$ (Algorithm \ref{al:1}).\par 
Suppose that $K_{1}'\cup K_{2}'$ is another pattern for $K$ defined by a sequence of non-zero even integers $A'=(e_{1}, f_{1},e_{2}, \ldots, e_{m},f_{m},e_{m+1})$ such that $\alpha'/\beta'=[e_{1}, f_{1},e_{2}, \ldots, e_{m},f_{m},e_{m+1}]$ with $\alpha'>0$ and $\alpha'>|\beta'|$. According to the Schubert's classification of rational knots and links, it must be $\alpha=\alpha'$, and either  $\beta\equiv \beta' \mod \alpha$  or $\beta\beta'\equiv 1\mod \alpha$ (see \cite{Sch} or Theorem 2 in \cite{KL}). There are at most three possibilities for $\beta'$: $\beta$, $\beta^{-1}$ and $\beta\pm\alpha$ (plus sign if $\beta<0$ and minus in other case). Since given the numerator and denominator the sequence of non-zero even numbers is unique according to Remark \ref{ob:1}, then each value of $\beta'$ define a unique diagram of $K_{1}\cup K_{2}$.\par
Let us first consider the case $\beta'=\beta^{-1}$. From the Palindrome Theorem (Proposition \ref{pr:pt}), it follows that the sequence $A'$ must coincide with the reversed  sequence of $A$. In the diagrams of the patterns associated to $A$ and $A'$, this operation corresponds to an isotopy that rotates $180^{\circ}$ the plane that contains the diagrams, followed by a rotation of $180^{\circ}$ around a vertical axis. This isotopy between the patterns can not be realized as an isotopy of the knot $K$, unless $A=A'$.  \par
Finally, suppose that $\beta'=\beta\pm\alpha$. From Lemma \ref{lem:as} we know that $(f(A_{e})_{c})$ represent a link isotopic to $K_{1}\cup K_{2}$. Since the sequence $(f(A_{e})_{c})$ is distinct from $A$, it must be $A'=(f(A_{e})_{c})$. Remember that $(f(A_{e})_{c})$ is obtained from an isotopy in $K_{1}\cup K_{2}$, where $K_{2}=B_{0}\cup B_{1}$ and $B_{0}$ is a vertical arc, that swaps the roles of $B_{0}$ and $B_{1}$, namely, it isotopes $B_{1}$ into a vertical arc. This isotopy can be realized as an isotopy that swaps the roles of $A_{0}$ and $A_{1}$ in $K$. 


\end{proof}

Furthermore, we can describe the action of the isotopy that switches the roles of $A_{0}$ and $A_{1}$ on the two tight parameterizations of the knot $K$ from Theorem \ref{teo:up}. If $(e_{1}p,e_{1}q,m_{1},e_{2}p,e_{2}q,m_{2},\ldots,m_{n},e_{n+1}p,e_{n+1}q)$, where $e_{i}\in\{1,-1\}$, is a tight parameterization obtained from the sequence $A$, then the tight parameterization obtained from $(f(A_{e}))_{c}$ will be the same except for a change of $e_{i}$ for $-e_{i}$,  $i=1,2,\ldots,n+1$. This is what would be the expected and follows directly form the Algorithm \ref{al:1}.\par

In order to illustrate how the processes and algorithms described in this section works, let us see an example. Consider the satellite $(1,1)$-knot $K=(\alpha,\beta,p,q)$ with rational-link pattern defined by the sequence
$$A=(-8,-4,2,4,4,-2,4)$$ 
The succession of steps to obtain the sequence associated to A: 
\begin{align*}
  A_{e}=(-2,0,-2,0,-2,0,-2,-4,2,4,2,0,2,-2,2,0,2) \\
 f(A_{e})=(2,-2,2,-2,2,-2,2,-4,-2,6,-2,2,-2,0,-2,2,-2)\\
 A'= (f(A_{e}))_{c}=(2,-2,2,-2,2,-2,2,-4,-2,6,-2,2,-4,2,-2)
\end{align*} 
The rational numbers obtained from the continued fractions corresponding to $A$ and $A'$  are, respectively, $-6766/817$ and $6766/5949$, as expected from Theorem \ref{teo:up}. Finally, the tight parameterizations deduced from $A$ and $A'$ as in Algorithm \ref{al:1} are, respectively:
\begin{align*}
  (-p,-q,-1,-p,-q,0,-p,-q,0,-p,-q,-4,p,q,5,p,q,1,p,q,-1,p,q,1,p,q) \\
(p,q,-1,p,q,0,p,q,0,p,q,-4,-p,-q,5,-p,-q,1,-p,-q,-1,-p,-q,1,-p,-q)
\end{align*}

Note that both parameterizations differ by a change of sign in each parameter $p$ and $q$, which is consistent with Theorem \ref{teo:up}. Based on Theorem \ref{teo:up}, we propose the following conjecture:

\begin{cnj}\label{teo:cu} Let $K$ be a $(1,1)$-knot in a tight presentation $K=A_{0}\cup A_{1}$. There exist exactly two tight parameterizations of $K$. The two parameterizations are related by an isotopy that swaps the roles of $A_{0}$ and $A_{1}$.   
\end{cnj}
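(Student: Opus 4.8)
The plan is to establish the statement in two halves, mirroring the structure of the proof of Theorem \ref{teo:up}: first that there exist (at least) two tight parameterizations related by the swap, and then that there are no others. For the existence half, recall that the single maximum $y_{1}$ and minimum $y_{0}$ of the height function divide $K$ into the two arcs $A_{0}$ and $A_{1}$. Starting from a tight presentation with $A_{0}$ straight, one may instead isotope $K$ so that $A_{1}$ becomes the straight vertical arc $\{w_{0}\}\times I$; by Theorem \ref{teo:31} this second straight bridge position yields its own tight parameterization. The rotation of $T\times I$ that reverses the $I$-factor interchanges $y_{0}$ with $y_{1}$ and hence the roles of $A_{0}$ and $A_{1}$, so the two parameterizations are related by precisely the swap described in the statement. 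As in the computation following Theorem \ref{teo:up}, I would expect this swap to act on the slopes $(p_{i},q_{i})$ by a global sign change while leaving the winding numbers $m_{i}$ essentially intact.

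For the uniqueness half, I would first isolate the two independent sources of choice in producing a tight parameterization. The first is the tightening itself: once a straight bridge position is fixed, the lifted arc $\beta$ has a well-defined homotopy class in $\mathfrak{B}_{\epsilon}$, and by Proposition \ref{teo:p1} (which rests on the uniqueness of the minimal-length representative from \cite{ABM}) the resulting sequence $(p_{1},q_{1},m_{1},\ldots,p_{n+1},q_{n+1})$ is canonical. Thus the tightening contributes no ambiguity. The second source is the choice of straight bridge position itself: I must show that, for a fixed choice of which arc is straightened, any two straight bridge positions of $K$ are related by a level-preserving isotopy of $T\times I$, so that they induce the same homotopy class of $\beta$ and hence the same parameterization. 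Granting this, every tight parameterization arises from one of the two choices of straightened arc, and the existence half identifies these as the swap-related pair.

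The main obstacle is exactly this last reduction, and it is where the satellite argument cannot be transcribed verbatim. In the satellite case the rigidity came from external input: Sakuma's theorem \cite{Sa} forces the companion torus knot to be unique, and Schubert's classification of rational links \cite{Sch} pins down the pattern up to the two moves $\beta'=\beta^{-1}$ and $\beta'=\beta\pm\alpha$, the former unrealizable as an isotopy of $K$ and the latter being exactly the swap (Lemma \ref{lem:as}). For a general $(1,1)$-knot there is no such classification, and indeed $(1,1)$-knots may admit genuinely distinct $(1,1)$-decompositions. I would therefore attempt to control the straight bridge position intrinsically, by studying the stabilizer of the isotopy class of $K$ inside the mapping class group $MCG_{2}(T)$ of the twice-punctured torus (as in \cite{CM}) and proving that this stabilizer acts on the set of tight parameterizations only through the involution induced by the swap. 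Concretely, one would need to show that any level-preserving self-homeomorphism of the straight bridge position fixing $K$ up to isotopy either preserves the parameterization or realizes the $A_{0}\leftrightarrow A_{1}$ interchange; ruling out further hidden symmetries, without the crutch of a Schubert-type normal form, is the essential difficulty and the reason the statement remains a conjecture.
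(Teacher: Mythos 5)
The statement you set out to prove is a \emph{conjecture} in the paper: the author proves it only for satellite $(1,1)$-knots (Theorem \ref{teo:up}) and explicitly leaves the general case open, so there is no proof of record to compare yours against. Read as a proof, your proposal has a genuine gap, and to your credit you name it yourself: the uniqueness half. In the satellite case all of the rigidity comes from outside the tightening machinery --- Theorem 1.2 of \cite{Sa} forces the companion torus knot to be unique, and Schubert's classification \cite{Sch} reduces the possible patterns to $\beta'\equiv\beta$, $\beta\beta'\equiv 1$, or $\beta'=\beta\pm\alpha$ modulo $\alpha$, of which the palindrome case is unrealizable as an isotopy of $K$ and the last case is exactly the swap of Lemma \ref{lem:as}. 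For a general $(1,1)$-knot nothing plays this role. Your proposed substitute --- analyzing the stabilizer of $K$ in $MCG_{2}(T)$ following \cite{CM} and showing it acts on tight parameterizations only through the swap involution --- is a research program, not an argument; and the key intermediate claim you would need, that any two straight bridge positions with the same arc straightened are related by a level-preserving isotopy, is precisely the kind of statement that can fail, since $(1,1)$-knots may admit genuinely distinct $(1,1)$-decompositions, as you note.

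The existence half is also incomplete relative to what the conjecture asserts. ``Exactly two'' requires not only that every tight parameterization arises from one of the two choices of straightened arc, but also that the two swap-related parameterizations are \emph{distinct}; in the satellite case this is visible from Algorithm \ref{al:1} (the swap changes the sign of every slope pair $(p_{i},q_{i})$, and the paper's worked example confirms it), but for a general knot, say one with extra symmetry, you have neither ruled out that the two coincide nor proven that the swap acts by the global sign change you ``expect.'' The one part of your outline that is solid is the observation that the tightening itself contributes no ambiguity, by the uniqueness of the minimal-length representative (Proposition \ref{teo:p1}, resting on \cite{ABM}); but independence of the parameterization from the choice of straight bridge position is unproven even for a fixed choice of straightened arc. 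In short, your write-up is an accurate diagnosis of why this statement is a conjecture rather than a theorem, but it is not a proof of it.
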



\section{The hyperbolic geoboard model}\label{sec:5}

In the present section we show a route to generalize the representation of $(1,1)$-knots from previous section to the general case of $(g,1)$-knots, $g\geq 1$. Suppose that $K=A_{0}\cup A_{1}\subset\Sigma^{g}\times [0,1]$ is a $(g,1)$-knot in straight bridge position with respect to the genus-$g$ Heegaard surface $\Sigma^{g}$, $g>1$, such that $A_{0}=\{w_{0}\}\times [0,1]$ for some $w_{0}\in \Sigma^{g}$ and $A_{1}$ is an arc transversal to the level surfaces $\Sigma^{g}_{t}=\Sigma^{g}\times \{t\}$, $t\in (0,1)$. We fix a standard set of  $2g$ simple closed geodesics $\Gamma=\{\alpha_{1},\alpha_{2},\ldots,\alpha_{2g}\}$ in $\Sigma^{g}$ having a point in common such that $\Sigma^{g}\setminus \bigcup \alpha_{i}$ is homeomorphic to a disk. \par
 
Consider the hyperbolic unit disk $\mathbb{D}^{2}$ as the universal covering of $\Sigma^{g}$ through the covering map $\varphi$. Suppose $\mathbb{D}^{2}$ is  tessellated by hyperbolic regular $4g$-gons of constant area corresponding to the fundamental domains of $\Sigma^{g}$ obtained after cutting $\Sigma^{g}$ along the curves in $\Gamma$ in the standard fashion (see Figure \ref{fig:h3}). If $\tilde{w}_{0}$ is the center of $\mathbb{D}^{2}$, we can assume that $\varphi(\tilde{w}_{0})=w_{0}$.\par
As we proceed in Section \ref{sec:2}, for a sufficiently small value $\epsilon>0$, let $\mathcal{H}_{\epsilon}^{g}= \mathbb{D}^{2}\setminus \bigcup D_{\epsilon}(w)$ be the hyperbolic multipunctured disk, where the punctures are obtained after removing an open disk $D_{\epsilon}(w)$ of radius $\epsilon$ and centered at $w$, for every $w\in \varphi^{-1}(w_{0})$.  \par

We aim to parameterize the smooth curves in $\mathcal{H}_{\epsilon}^{g}$ whose endpoints are contained in $\partial D_{\epsilon}(\tilde{w}_{0})$ and any other component $C$ of $\partial \mathcal{H}_{\epsilon}^{g}$. Given a curve $\beta$ with these characteristics it can be proven, as in the case $g=1$, that there exists a unique minimal-length curve $\beta_{0}$ in the homotopy class of $\beta$ in $\mathcal{H}_{\epsilon}^{g}$ as curves with endpoints in $\partial D_{\epsilon}(\tilde{w}_{0})$ and $C$. Moreover, Proposition \ref{teo:p1} can be generalized as follows: 
\begin{figure}
  \centering
    \includegraphics[width=12cm]{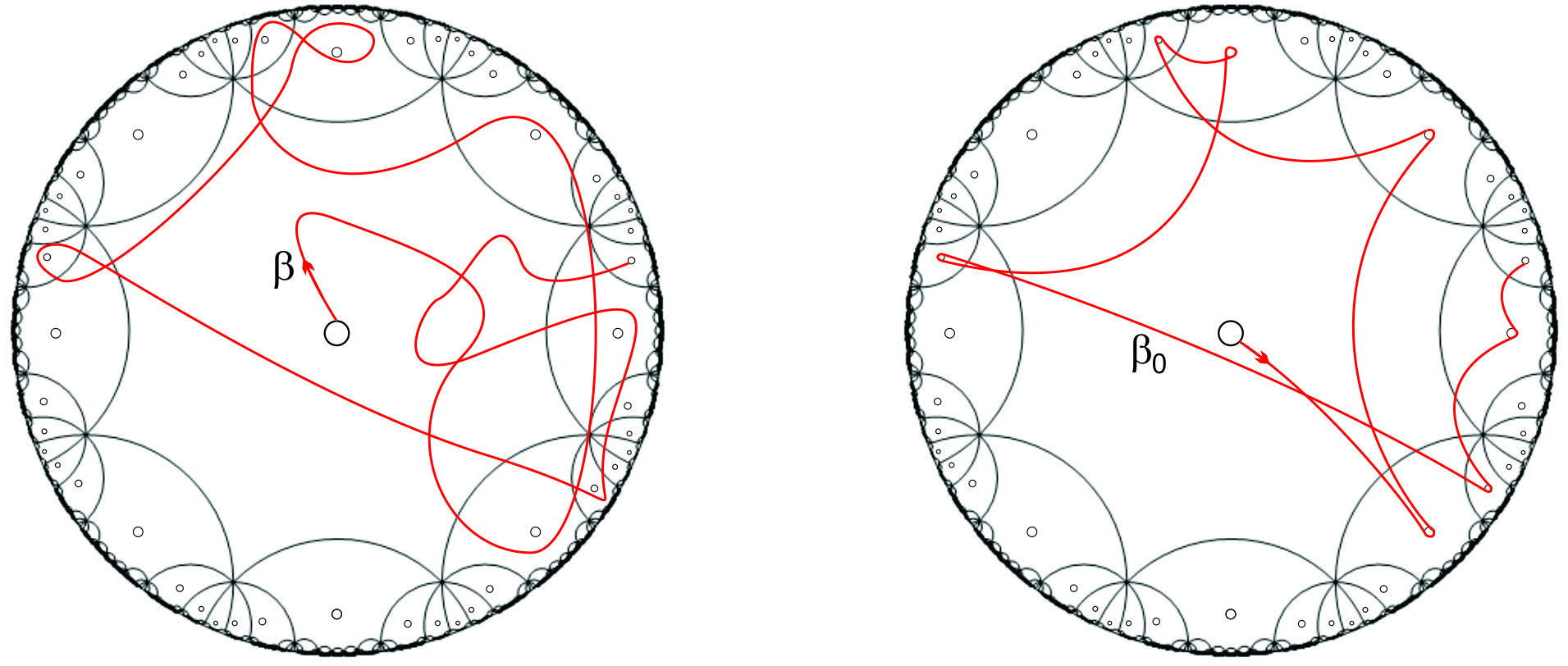}
  \caption{A curve and its minimal-length homotopic curve}
  \label{fig:h3}
\end{figure}
\begin{pr}\label{teo:c1} Let $\beta:[0,1]\rightarrow\mathcal{H}_{\epsilon}^{g}$ be a smooth arc with $\beta(0)\in\partial D_{\epsilon}(\tilde{w}_{0})$ and $\beta(1)\in\partial D_{\epsilon}(\tilde{w}_{0}')$, for some $\tilde{w}_{0}'\in \varphi^{-1}(w_{0})$ (it could be $\tilde{w}_{0}'=\tilde{w}_{0}$). There exists a unique minimal-length curve $\beta_{0}$ within the homotopy class of $\beta$  as curves in $\mathcal{H}_{\epsilon}^{g}$ with endpoints in $\partial D_{\epsilon}(\tilde{w}_{0})$ and $\partial D_{\epsilon}(\tilde{w}_{0}')$. The curve $\beta_{0}$ decomposes as $\beta_{0}=\gamma_{1}\cup\delta_{1}\cup\gamma_{2}\cup\dots\cup\delta_{n}\cup\gamma_{n+1}$, where $\delta_{i}$ is a point in $\partial D_{\epsilon}(\tilde{w}_{i})$ or a monotonous curve around $\partial D_{\epsilon}(\tilde{w}_{i})$, for some $\tilde{w}_{i}\in\varphi^{-1}(w_{0})$, and $\gamma_{i}$ is a geodesic arc sharing endpoints with $\delta_{i}$ and $\delta_{i+1}$ (except for the starting point of $\gamma_{1}$ and the endpoint of $\gamma_{n+1}$).  
\end{pr}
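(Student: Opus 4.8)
The plan is to reproduce, in the hyperbolic setting, the strategy behind Proposition \ref{teo:p1} and the shortest-loop result of \cite{ABM}, now exploiting that the ambient space is negatively curved. The starting observation is that the hyperbolic disk $\mathbb{D}^2$ is a complete, proper, uniquely geodesic CAT($-1$) space, and that each removed disk $D_{\epsilon}(w)$, being a metric ball, is geodesically convex. Consequently $\mathcal{H}_{\epsilon}^{g}$, endowed with its induced length (intrinsic path) metric, is a complete, locally compact, locally CAT($-1$) space whose boundary components $\partial D_{\epsilon}(w)$ are locally convex curves. I would carry out the proof in three stages: existence of a minimizer, its uniqueness, and the description of its shape.

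\emph{Existence.} Fix a competitor in the prescribed homotopy class (for instance $\beta$ itself) and let $L$ be its length; the infimum is then taken over curves of length at most $L$. Reparameterizing a length-minimizing sequence by arclength makes it uniformly $1$-Lipschitz, and since $\mathcal{H}_{\epsilon}^{g}$ is proper (closed metric balls are compact), Arzel\`a--Ascoli yields a subsequence converging uniformly to a curve $\beta_{0}$ lying in the closed region $\mathcal{H}_{\epsilon}^{g}$, with endpoints still on $\partial D_{\epsilon}(\tilde{w}_{0})$ and $\partial D_{\epsilon}(\tilde{w}_{0}')$. Lower semicontinuity of length under uniform convergence shows $\beta_{0}$ realizes the infimum, and because $\mathcal{H}_{\epsilon}^{g}$ is a manifold with boundary (hence semilocally simply connected and an ANR) the homotopy class, rel the two boundary circles, is preserved in the limit. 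This is the direct method exactly as in \cite{ABM}, with the hyperbolic metric replacing the flat one.

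\emph{Uniqueness.} Here negative curvature does the work. I would pass to the universal cover $\widetilde{\mathcal{H}_{\epsilon}^{g}}$ and argue that it is CAT($-1$): its interior carries the curvature $-1$ metric, while the lifted boundary circles are locally convex and so contribute no positive curvature; a Reshetnyak-type gluing and convexity argument then promotes this to a global CAT($-1$) bound on the simply connected cover. The chosen homotopy class selects a particular lift of each endpoint circle, and these lifts are convex subsets of $\widetilde{\mathcal{H}_{\epsilon}^{g}}$ (a complete, connected, locally convex subset of a CAT($-1$) space is convex). Since the distance function between two disjoint convex subsets of a CAT($-1$) space is strictly convex, the geodesic realizing their distance is unique; projecting it back down gives the unique minimizer $\beta_{0}$. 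I expect the main obstacle to be precisely this step: rigorously establishing the CAT($-1$) structure of $\widetilde{\mathcal{H}_{\epsilon}^{g}}$ (the assertion that the complement of finitely many disjoint convex disks, with its intrinsic metric and then universally covered, is CAT($-1$)) together with the convexity of the lifted boundary circles.

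\emph{Shape of $\beta_{0}$.} The decomposition $\beta_{0}=\gamma_{1}\cup\delta_{1}\cup\cdots\cup\gamma_{n+1}$ follows from a first-variation analysis of the minimizer. On any open subarc disjoint from $\partial\mathcal{H}_{\epsilon}^{g}$, the curve $\beta_{0}$ is locally length-minimizing in $\mathbb{D}^{2}$, hence a hyperbolic geodesic; these pieces are the arcs $\gamma_{i}$. Where $\beta_{0}$ meets a boundary circle it must run along it, since it cannot enter a removed disk, producing the arcs $\delta_{i}$, and the taut-string first-order condition forces each $\gamma_{i}$ to be tangent to the circle it leaves or reaches; monotonicity of $\delta_{i}$ (no backtracking along $\partial D_{\epsilon}(\tilde{w}_{i})$) is immediate from minimality. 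Finiteness of $n$ follows because $\beta_{0}$ has finite length while the centers in $\varphi^{-1}(w_{0})$ form a discrete, uniformly separated set, bounding the number of distinct circles that $\beta_{0}$ can visit. This reproduces, in the hyperbolic geoboard, the decomposition established after Figure \ref{fig:b2} and used in the proof of Proposition \ref{teo:p1}.
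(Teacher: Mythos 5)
Your proposal is correct, but it does substantially more than the paper itself does: the paper states Proposition \ref{teo:c1} without any proof, saying only that existence and uniqueness of $\beta_{0}$ ``can be proven, as in the case $g=1$'', and in the case $g=1$ (Proposition \ref{teo:p1}) those facts are themselves outsourced to Lemma 1 of \cite{ABM}, which concerns the \emph{Euclidean} multipunctured plane. So the paper's route is analogy plus citation, while yours is a self-contained argument. Your existence step (arclength reparameterization, Arzel\`a--Ascoli using that a complete locally compact length space is proper, lower semicontinuity of length, preservation of the homotopy class in the limit) is the standard direct method and is fine. Your uniqueness step via Alexandrov geometry is the genuinely different ingredient: the facts you flag as the main obstacle --- that the complement of disjoint open convex disks in a curvature $-1$ surface, with its intrinsic metric, is locally CAT($-1$); that its universal cover is then globally CAT($-1$) (Cartan--Hadamard for complete locally CAT($\kappa$) spaces, $\kappa\leq 0$); and that the lifts of the $\pi_{1}$-injective boundary circles are convex (a local isometry from a complete connected space into a CAT($0$) space lifts to an isometric embedding with convex image) --- are all standard, citable results from comparison geometry (Bridson--Haefliger; Alexander--Berg--Bishop for manifolds with boundary), so this is not a gap, just work to be written out. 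Your route even makes uniqueness cleaner in the hyperbolic case than in the flat one, since CAT($-1$) excludes the flat strips that in the Euclidean setting must be ruled out using the strict convexity of the circles themselves. Finally, your first-variation analysis giving the decomposition into geodesic segments $\gamma_{i}$ tangent to the circles and monotone boundary arcs $\delta_{i}$, with finiteness from the uniform separation of $\varphi^{-1}(w_{0})$, matches the description the paper asserts (without argument) in the statement and, for $g=1$, after Figure \ref{fig:b2}. In short: the paper's approach buys brevity; yours buys an actual proof, which is arguably needed here because \cite{ABM} does not cover the hyperbolic setting.
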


In Figure \ref{fig:h3} we show an arbitrary curve $\beta$ in $\mathcal{H}_{\epsilon}^{2}$ (left-hand picture) and its proposed shortest homotopic curve (right-hand picture). We extend the notions of arc reduction and stabilization of the minimal-length  curve $\beta_{0}$ from Section \ref{sec:2}, namely, there exists a value $\epsilon>0$ such that in the hyperbolic multipunctured disk $\mathcal{H}_{\epsilon}^{g}$, the minimal-length representative $\beta_{0}$ in the class of $\beta$ is simplified (does not admit an arc reduction and it is stabilized). It only remains to assign parameters to the subcurves in the decomposition $\beta_{0}=\gamma_{1}\cup\delta_{1}\cup\gamma_{2}\cup\dots\cup\delta_{n}\cup\gamma_{n+1}$. Given the subcurve $\delta_{i}\subset \partial D_{\epsilon}(w_{i})$ of $\beta_{0}$, it would be possible to define the winding number $m_{i}\in \mathbb{Z}$ of $\delta_{i}$ around $w_{i}$ as in  Preposition \ref{teo:p1}. For $i\in\{1,,2\ldots,n+1\}$, the geodesic arc $\gamma_{i}$ has its endpoints on $\partial D_{\epsilon}(\tilde{w}_{i-1})$ and $\partial D_{\epsilon}(\tilde{w}_{i})$ (set $\tilde{w}_{n+1}=\tilde{w}_{0}'$). Let $\lambda_{i}$ be the oriented geodesic that passes through $\tilde{w}_{i-1}$ and $\tilde{w}_{i}$  (see Figure \ref{fig:h4}). The oriented geodesic $\lambda_{i}$ is described by two ordered points $z_{1},z_{2}\in \partial\mathbb{D}^{2}$. Let $r_{i},s_{i}\in [0,2\pi)$ be the parameters corresponding to the points $z_{1}$ and $z_{2}$, respectively, in the parameterization $f(x)=(cosx,sinx)$, $x\in [0,2\pi)$, of $\mathbb{S}^{1}=\partial\mathbb{D}^{2}$. Note that $\lambda_{i}$ contains infinitely many points of $\varphi^{-1}(w_{0})$, but $\tilde{w}_{i}$ must be the closest point to $\tilde{w}_{i-1}$ in $\lambda_{i}\cap \varphi^{-1}(w_{0})$ in direction of $\lambda_{i}$.

Under the previous assumptions we propose a parameterization of the family of $(g,1)$-knots:       
 
\begin{figure}
  \centering
    \includegraphics{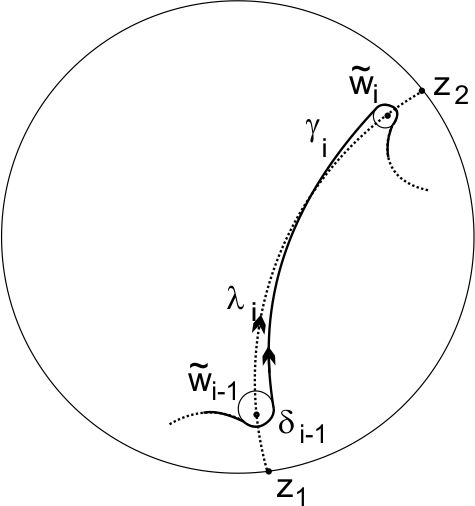}
  \caption{Parameters of a geodesic}
  \label{fig:h4}
\end{figure} 

\begin{teo} \label{teo:c2} Let $K$ be a  $(g,1)$-knot. Then $K$ is parameterized by an ordered sequence of numbers $(r_{1}, s_{1}, m_{1},r_{2},s_{2},m_{2},\ldots,m_{n},r_{n+1},s_{n+1})$, for some $n\geq 0$,  $m_{i}\in \mathbb{Z}$, and $r_{i},s_{i}\in [0,2\pi)$  for every $i$.
\end{teo}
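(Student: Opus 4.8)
The plan is to mirror the proof of Theorem \ref{teo:31}, replacing the flat universal cover $\tilde{T}$ by the hyperbolic disk $\mathbb{D}^{2}$ and Proposition \ref{teo:p1} by its hyperbolic counterpart, Proposition \ref{teo:c1}. First I would set up the same commutative diagram of projections, now with $\pi:\Sigma^{g}\times I\rightarrow\Sigma^{g}$ the projection onto the surface and $\varphi:\mathbb{D}^{2}\rightarrow\Sigma^{g}$ the covering map. Placing $K=A_{0}\cup A_{1}$ in straight bridge position with $A_{0}=\{w_{0}\}\times I$, I parameterize $A_{1}$ by a smooth $\alpha:I\rightarrow\Sigma^{g}\times I$ with $\alpha(t)\in\Sigma^{g}_{t}$, and lift the projected curve $\pi\circ\alpha$ to a curve $\beta:I\rightarrow\mathbb{D}^{2}$ starting at a fixed point $\tilde{w}_{0}\in\varphi^{-1}(w_{0})$ and satisfying $\pi\circ\alpha(t)=\varphi\circ\beta(t)$.

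Next I would invoke Proposition \ref{teo:c1} to obtain, for a sufficiently small $\epsilon>0$, the unique simplified minimal-length representative $\beta_{0}=\gamma_{1}\cup\delta_{1}\cup\cdots\cup\delta_{n}\cup\gamma_{n+1}$ in the homotopy class of the restriction of $\beta$ to the hyperbolic multipunctured disk $\mathcal{H}^{g}_{\epsilon}$ (the existence of such an $\epsilon$ being guaranteed by the hyperbolic versions of arc reduction and stabilization discussed just before the statement). To each arc $\delta_{i}\subset\partial D_{\epsilon}(\tilde{w}_{i})$ I assign the winding number $m_{i}\in\mathbb{Z}$ exactly as in Proposition \ref{teo:p1}, and to each geodesic segment $\gamma_{i}$ I assign the pair $(r_{i},s_{i})\in[0,2\pi)^{2}$ recording the boundary endpoints on $\partial\mathbb{D}^{2}=\mathbb{S}^{1}$ of the complete geodesic $\lambda_{i}$ through $\tilde{w}_{i-1}$ and $\tilde{w}_{i}$. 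This yields the sequence $(r_{1},s_{1},m_{1},\ldots,m_{n},r_{n+1},s_{n+1})$, and since each $\tilde{w}_{i}$ is recovered as the point of $\varphi^{-1}(w_{0})$ on $\lambda_{i}$ closest to $\tilde{w}_{i-1}$ in the positive direction, the data reconstructs $\beta_{0}$ and hence the homotopy class of $\beta$.

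The final step, as in Theorem \ref{teo:31}, is to promote the homotopy between the projected curves into an isotopy of the knot. I would set $\alpha_{0}(t)=(\varphi\circ\beta_{0}(t),t)$ and build a homotopy $H:\bar{I}\times I\rightarrow\Sigma^{g}\times I$ from a lifted homotopy $\tilde{H}$ in $\mathcal{H}^{g}_{\epsilon}$ via $H(t,s)=(\varphi\circ\tilde{H}(t,s),t)$. Because the last coordinate is always $t$, the track of each intermediate curve stays transversal to the level surfaces $\Sigma^{g}_{t}$, so $H$ is in fact an isotopy between $\bar{\alpha}$ and $\alpha_{0}$ that preserves the bridge presentation; the parameterization of $\beta_{0}$ then furnishes the claimed parameterization of $K$.

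The hardest part will be verifying that the projected geodesic arc $\varphi\circ\gamma_{i}$ remains monotone with respect to the level parameter $t$, which is precisely what guarantees transversality and hence that $H$ is an isotopy rather than merely a homotopy. In the flat case this monotonicity was automatic from the linearity of the segments $\gamma_{i}$; in the hyperbolic setting I expect it to follow from the simplification hypothesis together with the convexity of geodesics in $\mathbb{D}^{2}$, but confirming that the reparameterization carrying $\gamma_{i}$ to the height function has no critical points is the step that genuinely uses the hyperbolic geometry and is not a purely formal transcription of the genus-one argument.
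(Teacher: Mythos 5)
Your proposal follows essentially the same route as the paper: the paper's own proof of Theorem \ref{teo:c2} is exactly this transcription of the proof of Theorem \ref{teo:31}, with $\tilde{T}$ replaced by $\mathbb{D}^{2}$, Proposition \ref{teo:p1} replaced by Proposition \ref{teo:c1}, and the same lift--minimize--reparameterize construction. The only discrepancy is your closing worry, which is unfounded by your own construction: since every intermediate curve has the form $t\mapsto(\varphi\circ\tilde{H}(t,s),t)$, its $I$-coordinate is the identity in $t$, so transversality to the level surfaces $\Sigma^{g}_{t}$ is automatic and no monotonicity of $\varphi\circ\gamma_{i}$ with respect to height needs to be verified --- in the hyperbolic case exactly as in the flat one.
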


The proof of Theorem \ref{teo:c2} is completely analogous to the poof of Theorem \ref{teo:31} with the obvious adjustments. Suppose $K=A_{0}\cup A_{1}\subset \Sigma^{g}\times I$ is a $(g,1)$-knot in straight bridge position with respect to the standard genus-$g$ surface $\Sigma^{g}$, such that $A_{0}=\{w_{0}\}\times I$. If $\alpha: I\rightarrow \Sigma^{g}\times I$ is a smooth parameterization of $A_{1}$ such that $\alpha(t)\in \Sigma^{g}\times \{t\}$ for each $t\in I$, then $\pi\circ\alpha$ is a closed curve in $\Sigma^{g}$, where $\pi:\Sigma^{g}\times I\rightarrow \Sigma^{g}$ is the projection onto the surface. Let $\tilde{\beta}$ be the lifting of $\pi\circ\alpha$ to $\mathbb{D}^{2}$ and starting at $\tilde{w}_{0}$. Take $\epsilon>0$ sufficiently small and define the hyperbolic multipunctured disk $\mathcal{H}_{\epsilon}^{g}$ as before. Let $\beta$ be the subcurve of $\tilde{\beta}$ that is contained in $\mathcal{H}_{\epsilon}^{g}$, and let $\beta_{0}$ be its minimal-length homotopic curve in $\mathcal{H}_{\epsilon}^{g}$ under a homotopy $H$. The homotopy $H$ induces an isotopy in $K$ as in Theorem \ref{teo:31} and the parameterization of $\beta_{0}$ induces the parameterization of $K$.

\vspace{.8cm}

 \noindent\textbf{Acknowledgment}

This research work was supported by project FORDECYT 265667 and CONACYT Postdoctoral Fellowship. The author is grateful to professors M. Neumann-Coto, M. Eudave-Mu\~noz and J.C. G\'omez-Larra\~naga for their valuable observations and comments.

\vspace{.5cm}
\noindent CENTRO DE INVESTIGACI\'ON EN MATEM\'ATICAS, A.C, JALISO S/N, COL. VALENCIANA, CP: 36023, GUANAJUATO, GTO., M\'EXICO.\\
Email-address: frias4@cimat.mx


\begin{thebibliography}{99}


\bibitem{ABM} M. Arnold, Y. Baryshnikov and Y. Mileyko, \emph{Typical representatives of free homotopy classes in multi-punctured plane}, Journal of Topology and Analysis Vol. 11, No. 03 (2019), 623-659. 


\bibitem{BCL} J. Boissonnat, A. C\'er\'ezo, and J. Leblond,  \emph{Shortest paths of bounded curvature in the plane}. J Intell Robot Syst 11 (1994), 5-20, https://doi.org/10.1007/BF01258291.

\bibitem{CM} A. Cattabriga, M. Mulazzani \emph{$(1,1)$-knots via the mapping class group of the twice punctured torus}, Adv. Geom. 4(2) (2004), 263-277.

\bibitem{CMS} S. Cho, D. McCullough, A. Seo, \emph{Arc distance equals level number}, Proc. Amer. Math. Soc.
137 (2009), 2801-2807.

\bibitem{CK} D. H. Choi and K. H. Ko, \emph{Parameterizations of 1-bridge torus knots}, J. Knot Theory Ramifications 12 (2003), no. 4, 463-491.



\bibitem{DL} L.E. Dubins, \emph{On Curves of Minimal Length with a Constraint on Average Curvature, and with Prescribed Initial and Terminal Positions and Tangents}. American Journal of Mathematics 79 (3) (1957), 497-516.




\bibitem{EM3}
M. Eudave-Mu\~noz,  \emph{Incompressible Surfaces and $(1,1)$-Knots}. Journal of Knot Theory and its Ramifications Vol. 15, No. 7 (2006), 935-948. 

\bibitem{FE}
M. Eudave-Mu\~noz, J. Fr\'ias, \emph{The Neuwirth Conjecture for a family of satellite knots}. Journal of Knot Theory and Its Ramifications Vol. 28, No. 02, 1950017 (2019).

\bibitem{KL}
L.H. Kauffman, S. Lambropoulou, \emph{On the Classification of Rational Knots}. L'Enseign. Math. 49 (2003), 357-410.
%
%

\bibitem{MS}
 K. Morimoto, M. Sakuma,  \emph{On unknotting tunnels for knots}. Math. Ann. 289 (1991), 143-167. 



\bibitem{Sa} T. Saito, \emph{Satellite $(1,1)$-knots and meridionally incompressible surfaces}, Topology and its Applications 149 (2005), 33-56.
\bibitem{Sch}
H. Schubert, \emph{Knoten mit zwei Br\"ucken}, Math. Zeitschrift 65 (1956), 133-
170.

\end{thebibliography}
\end{document}